\providecommand{\Epi}{\textnormal{Epi}}
\providecommand{\Rep}{\textnormal{Rep}}
\providecommand{\Bcom}{B_{\textnormal{com}}}
\providecommand{\Hom}{\textnormal{Hom}}
\providecommand{\ann}{\textnormal{ann}}
\providecommand{\ab}{\textnormal{ab}}
\providecommand{\Sq}{\textnormal{Sq}}
\providecommand{\Z}{\mathbb{Z}}
\providecommand{\C}{\mathbb{C}}
\providecommand{\R}{\mathbb{R}}
\providecommand{\Q}{\mathbb{Q}}
\providecommand{\RP}{\mathbb{RP}}
\providecommand{\F}{\mathbb{F}}
\providecommand{\bigast}{\scalebox{1.5}{\raisebox{-0.2ex}{$\ast$}}}
\providecommand{\colim}{\mathop{\textnormal{colim}}}
\newtheorem{lemma}{Lemma}
\newtheorem{prop}{Proposition}
\newtheorem{theorem}{Theorem}
\newtheorem{corl}{Corollary}
\newtheorem*{introthm}{Theorem \ref{\introthmref}}
\newtheorem*{introcorl}{Corollary \ref{\introthmref}}
\newtheorem*{introprop}{Proposition \ref{\introthmref}}
\theoremstyle{definition}
\newtheorem{defi}{Definition}
\newtheorem{exam}{Example}
\newtheorem{remark}{Remark}
\title{Nilpotent $n$-tuples in $SU(2)$\footnotetext{AMS Subject Classification 2010: 22E99 (primary), 55R35 (secondary)}}
\author{Omar Antol\'{i}n--Camarena and Bernardo Villarreal\thanks{The second author was supported by a CONACyT fellowship.}}
\date{\today}
\begin{document}

\maketitle

\begin{abstract}
We describe the connected components of the space $\Hom(\Gamma,SU(2))$ of homomorphisms for a discrete nilpotent group $\Gamma$. The connected components arising from homomorphisms with non-abelian image turn out to be homeomorphic to $\RP^3$. We give explicit calculations when $\Gamma$ is a finitely generated \emph{free nilpotent} group. In the second part of the paper we study the filtration $\Bcom SU(2)=B(2,SU(2))\subset\cdots \subset B(q,SU(2))\subset\cdots$ of the classifying space $BSU(2)$ (introduced by Adem, Cohen and Torres-Giese), showing that for every $q\geq2$, the inclusions induce a homology isomorphism with coefficients over a ring in which 2 is invertible.  Most of the computations are done for $SO(3)$ and $U(2)$ as well.
\end{abstract}

\tableofcontents

\section{Introduction}
Given a discrete group $\Gamma$ and a topological group $G$, we topologize the set of homomorphisms $\Hom(\Gamma,G)$ as a subspace of $G^\Gamma$, where the latter space is given the product topology.
In the most commonly studied case, $\Gamma$ is finitely generated. If $\Gamma$ has $k$ generators, $\Hom(\Gamma,G)$ is in one to one correspondence with the subset of the product $G^k$ consisting of $k$-tuples satisfying the relations that hold among the generators, and the topology we gave $\Hom(\Gamma,G)$ agrees with its topology as a subspace of $G^k$. In this paper we give a full description of the spaces of homomorphisms of a discrete nilpotent group $\Gamma$, to the Lie group $SU(2)$. We do this by showing that all non-abelian nilpotent subgroups of $SU(2)$ are conjugate to one of the generalized quaternions $Q_{2^q}$ of order $2^q$.

\newcommand{\introthmref}{main}
\begin{introthm}\label{main}
Let $\Gamma$ be a discrete nilpotent group, and for every $q\geq 3$ let $\Epi(\Gamma, Q_{2^q})$ denote the set of surjective homomorphisms $\Gamma\to Q_{2^q}$. Then there is a homeomorphism
\[\Hom(\Gamma,SU(2))\cong \Hom(\Gamma^\ab,SU(2))\sqcup\bigsqcup_{q\geq 3}\bigsqcup_{\mathcal{O}}PU(2)\]
where $\mathcal{O}$ runs through the orbits $\Epi(\Gamma,Q_{2^q})/N_{SU(2)}(Q_{2^q})$ induced by the action of conjugation of the normalizer in $SU(2)$.
\end{introthm}

To have a full description of $\Hom(\Gamma, SU(2))$ one also needs a description of $\Hom(\Gamma^\ab, SU(2))$, which is afforded by the following proposition for a general discrete abelian group.

\renewcommand{\introthmref}{hps}
\begin{introprop}
Let $A$ be a discrete abelian group. There is a pushout square which is also a homotopy pushout square,
\[\xymatrix{SU(2)/N(T)\times \Hom(A,\{\pm I\})\ar[r]\ar[d]&SU(2)/T\times_{\Z/2} \Hom(A,T)\ar[d]\\
\Hom(A,\{\pm I\})\ar[r]&\Hom(A,SU(2))}\] 
where $T$ is a maximal torus of $SU(2)$, $N(T)$ its normalizer; the horizontal arrows are induced by the inclusions $\{\pm I\}\hookrightarrow T$, $\{\pm I\}\hookrightarrow SU(2)$; the vertical arrows are induced by conjugation, and the action of $\Z/2$ on $SU(2)/T\times \Hom(A,T)$ is diagonal, given by the antipodal action on $SU(2)/T\cong S^2$ and by complex conjugation on $\Hom(A,T)$.

\end{introprop}

As an application of Theorem \ref{main} we compute the number of connected components of the space $\Hom(F_n/\Gamma^{q+1}_n,G)$ where $F_n/\Gamma^{q+1}_n$ is the free nilpotent group on $n$ generators of nilpotency class $q$, and $G=SU(2), SO(3)$ or $U(2)$. For example:

\renewcommand{\introthmref}{Theorem1}
\begin{introcorl}
Let $q\geq 2$ and $n\geq 1$. Then
\[\Hom(F_n/\Gamma^{q+1}_n,SU(2))\cong \Hom(\Z^n,SU(2))\sqcup \bigsqcup_{C(n,q+1)}PU(2)\]
where 
\[C(n,q+1)=\frac{2^{n-2}(2^n-1)(2^{n-1}-1)}{3}+(2^n-1)(2^{(q-2)(n-1)}-1)2^{2n-3}.\]
\end{introcorl}

In section \ref{sec:so3-u2}, we cover the cases $G=SO(3)$ and $U(2)$. Section \ref{sec:um} contains a description of the representation space $\Rep(F_n/\Gamma^3_n,U(m))$ (the orbit space associated to the conjugation action of $U(m)$ over $\Hom(F_n/\Gamma^3_n,U(m))$) that expresses it as a union of almost commuting tuples of block matrix subgroups of $U(m)$ that can be identified with the direct product $U(m_1)\times\cdots\times U(m_k)$ where $m_1+\cdots+m_k=m$.

The spaces $\Hom(F_n/\Gamma^{q}_n,G)$ assemble into a simplicial space whose geometric realization is denoted by $B(q,G)$. These spaces yield a filtration \[B(2,G) \subseteq B(3,G) \subseteq \cdots \subseteq BG\] originally introduced by Adem, Cohen and Torres-Giese \cite{Ad5}. Using our work in section 1, and \cite[Theorem 4.6]{Ad5} applied to $Q_{2^{n}}$ we see that
\[B(r,Q_{2^{n}})\simeq B\left(\bigast_{\Z/{2^{r-1}}}^{2^{n-r}}Q_{2^{r}} *_{\Z/{2^{r-1}}} \Z/{2^{n-1}}\right)\]
where $*_A$ denotes the amalgamated product along $A$. The above formula allowed us to do some computations in cohomology for $r=2$. In this case, the space $B(2,G)$ is denoted by $\Bcom G$.\\

\renewcommand{\introthmref}{cr2gq}
\begin{introprop}\leavevmode
\begin{enumerate}
\item There is an isomorphism of graded rings $H^*(\Bcom Q_8;\F_2)\cong \F_2[y_1,y_2,y_3,z]/(y_iy_j,y_1^2+y_2^2+y_3^2,i\ne j)$ where $y_i$ has degree 1 and $z$ degree 2. 

\item Let $n\geq 4$. Then the $\F_2$-cohomology ring of $\Bcom Q_{2^{n}}$ has a presentation given by
\[H^*(\Bcom Q_{2^{n}};\F_2)\cong \F_2[x_1,x_2,y_1,\ldots,y_{2^{n-2}},z]/(x_1^2,x_ky_i,y_iy_j,i\ne j ,x_2+\sum_{i=1}^{2^{n-2}}y_i^2),\]
where $x_1,y_j$ have degree 1 and $x_2,z$ have degree 2.
\end{enumerate}
\end{introprop}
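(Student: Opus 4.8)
The plan is to compute these cohomology rings using the explicit amalgamated-product description of $B(2,Q_{2^n})$ supplied by the formula in the introduction. Setting $r=2$ gives
\[
\Bcom Q_{2^n}=B(2,Q_{2^n})\simeq B\Bigl(\bigast_{\Z/2}^{2^{n-2}}Q_4 *_{\Z/2}\Z/2^{n-1}\Bigr),
\]
so the group in question is a free product, amalgamated over the common central $\Z/2$, of $2^{n-2}$ copies of $Q_4=\mu_4$ together with one copy of $\Z/2^{n-1}$. The strategy is therefore to build $H^*(\Bcom Q_{2^n};\F_2)$ from the cohomology of the pieces via the Mayer--Vietoris sequence attached to an amalgamated product, i.e.\ the homotopy pushout $BA\to BG_1,BG_2$ giving $B(G_1*_A G_2)$. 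First I would recall the $\F_2$-cohomology of the building blocks: $H^*(B\Z/2;\F_2)=\F_2[t]$, $H^*(B\mu_4;\F_2)=\F_2[x]/(x^2)\otimes\F_2[w]$ with $\deg x=1,\deg w=2$ (the polynomial generator being the image of the Bockstein/Euler class), and $H^*(B\Z/2^{n-1};\F_2)$ of the same shape. Crucially, the amalgamating $\Z/2$ is the center, and the restriction maps $H^*(BG_i;\F_2)\to H^*(B\Z/2;\F_2)$ send the degree-$1$ generators to $0$ (since the center injects into each cyclic factor with image killing the restriction in degree one for $\mu_4$ and $\Z/2^{n-1}$ when $n\geq 4$) while the degree-$2$ classes $w$ all restrict to $t^2$. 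This controlled restriction behavior is what makes the Mayer--Vietoris computation tractable.

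Next I would assemble the answer iteratively. For a single amalgamated product $G_1*_A G_2$ with $A=\Z/2$ central, the Mayer--Vietoris sequence
\[
\cdots\to H^{*-1}(BA)\to H^*(B(G_1*_AG_2))\to H^*(BG_1)\oplus H^*(BG_2)\to H^*(BA)\to\cdots
\]
splits into short exact sequences once one identifies the images of the restriction maps. Because all degree-$2$ generators hit the common class $t^2\in H^2(BA)$, taking the fiber product over $H^*(BA)$ forces a single surviving degree-$2$ class (call it $z$, or $x_2$ in part (2)) together with the various degree-$1$ classes, one per cyclic factor, whose pairwise products must vanish (they come from distinct summands meeting only in the amalgam, where they restrict to zero). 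Iterating the amalgamation over the $2^{n-2}$ copies of $\mu_4$ and the extra $\Z/2^{n-1}$ factor, I expect to produce: one degree-$1$ class $x_1$ from the $\Z/2^{n-1}$ summand (satisfying $x_1^2=0$ since $n\geq4$ makes that restriction vanish), one degree-$1$ class $y_i$ per quaternionic $\mu_4$ factor, a single degree-$2$ class $z$ coming from the amalgamated $BA$ contribution, and a second degree-$2$ class $x_2$ identified with $\sum y_i^2$ (equivalently $t^2$) through the relation recorded in the presentation. The relations $y_iy_j=0$ for $i\neq j$ and $x_ky_i=0$ are exactly the statements that products of degree-$1$ classes coming from different amalgamation factors vanish, and $x_2+\sum y_i^2=0$ records that the common restriction identifies these squares with the central degree-$2$ class.

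For part (1), the case $Q_8$ ($n=3$), the same method applies but with $r=2$, $n=3$ giving $B(\bigast_{\Z/2}^{2}Q_4 *_{\Z/2}\Z/4)$; here the arithmetic of generators changes because $\Z/4=\mu_4$ behaves like the quaternionic factors, yielding three symmetric degree-$1$ classes $y_1,y_2,y_3$ with the single quadratic relation $y_1^2+y_2^2+y_3^2=0$, all pairwise products $y_iy_j$ vanishing, and one degree-$2$ class $z$. I would verify this presentation directly against the known additive $\F_2$-cohomology of $\Bcom Q_8$, which can be cross-checked against the Poincaré series predicted by the Mayer--Vietoris computation. The main obstacle I anticipate is \emph{not} the additive structure but pinning down the ring structure precisely: the Mayer--Vietoris sequence gives the groups and the restriction maps, but to read off the products $y_iy_j$ and the relation $x_2=\sum y_i^2$ I must control the multiplicative comparison maps carefully, ensuring that classes pulled back from different factors genuinely multiply to zero rather than to some class I have overlooked, and that the single degree-$2$ generator $z$ is multiplicatively independent from $x_2$. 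Establishing that the candidate presentation is complete — that there are no further relations — will likely require a dimension count comparing the Poincaré series of the proposed quotient ring with the additive ranks computed from the long exact sequence in each degree.
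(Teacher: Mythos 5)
Your starting point --- the amalgamated-product description $\Bcom Q_{2^n}\simeq B\bigl(\Z/2^{n-1}*_{\Z/2}(\bigast_{\Z/2}^{2^{n-2}}\Z/4)\bigr)$ --- is the same one the paper uses, and your Mayer--Vietoris analysis of this graph of groups does recover the correct additive answer (the restriction maps behave exactly as you describe). But the route diverges from the paper's at the crucial step, and that is where the gap lies: Mayer--Vietoris cannot deliver the ring structure, which is the entire content of the proposition beyond the additive computation already recorded in the paper just before it. Concretely, since the square of the degree-one generator vanishes in $H^*(B\Z/4;\F_2)$ and in $H^*(B\Z/2^{n-1};\F_2)$, every class $y_i^2$, and likewise every product $y_iy_j$ and $x_1y_i$, restricts to zero on \emph{every} vertex group; they all lie in the kernel of $H^*(B\Gamma)\to\bigoplus_v H^*(BG_v)$, which in each positive even degree is $2^{n-2}$-dimensional (the injective image of the connecting homomorphisms from the $2^{n-2}$ edge groups). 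So your argument that ``products of degree-one classes coming from different amalgamation factors vanish because they restrict to zero in the amalgam'' only places $y_iy_j$ inside this large kernel; it does not show $y_iy_j=0$, nor that the $y_i^2$ are linearly independent, nor that $y_i^3=0$ (which is the content of $x_2y_i=0$), nor that $z$ is polynomial. The connecting homomorphism is not a ring map, so none of these facts are visible to the long exact sequence. There is also a sign of confusion in your identification of the degree-two classes: $z$ is the class restricting to the polynomial generator on each vertex group (it is detected by the fiber-product part of Mayer--Vietoris), whereas $x_2=\sum y_i^2$ restricts to zero on every vertex group and lies in the connecting-map image; they are distinct classes, and ``$x_2$, equivalently $t^2$'' does not parse.

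The paper resolves exactly this difficulty by first dividing out the central $\Z/2$: the quotient $\Gamma/\mu_2\cong\Z/2^{n-2}*\bigast^{2^{n-2}}\Z/2$ is a genuine free product, so its classifying space is a wedge and its cohomology ring is transparent --- there each $y_i$ becomes a polynomial generator and the relations $y_iy_j=0$, $x_ky_i=0$ hold for the trivial reason that cup products across wedge summands vanish. One then runs the Lyndon--Hochschild--Serre spectral sequence of the central extension $\Z/2\to\Gamma\to\Gamma/\mu_2$, where $d_2$ is multiplication by the $k$-invariant $x_2+\sum y_i^2$ and a Steenrod-square argument kills $d_3$; since all relations involve only classes pulled back from the base, they hold on the nose in $H^*(B\Gamma)$, and the additive collapse leaves no room for a hidden relation on $z$. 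To salvage your approach you would need some comparable mechanism for computing products inside the connecting-map kernel --- for instance, pulling the $y_i$ back from the quotient free product anyway --- at which point you have essentially reconstructed the paper's proof.
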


Increasing the degree of nilpotency class makes cohomology calculations considerably harder as we illustrate for the case of $B(3,Q_{16})$ (part of our calculation uses {\sc Singular} \cite{Singular}).

In the last section, we describe the homotopy type of the spaces $B(q,SU(2))$ ---which classify principal $SU(2)$-bundles with transitional nilpotency class less than $q$, as defined by Adem, G\'omez, Lind and Tillman \cite{Ad3}. We show there is a homotopy pushout square
\[\xymatrix{
PU(2)\times_{N_{q+1}} B({q},Q_{2^{q+1}})\ar[d]\ar[r]&B({q},SU(2))\ar[d]\\
PU(2)\times_{N_{q+1}}BQ_{2^{q+1}}\ar[r]&B(q+1,SU(2)),
}\]
where $N_{q+1}$ is the normalizer of the dihedral group $D_{2^{q}}$ in $PU(2)$. 

\renewcommand{\introthmref}{main2}
\begin{introthm}\label{main2}
The inclusions in the filtration
\[\Bcom SU(2)\subset B(3,SU(2))\subset\cdots\subset B(q,SU(2))\subset\cdots\]
are homology isomorphisms with coefficients over a ring $R$ where $2$ is invertible. Moreover, the cohomology ring of each term of the filtration is given by $R[c_2,y_1]/(y_1^2)$, where $\deg(c_2)=\deg(y_1)=4$, and $c_2$ is the pullback of the second Chern class $c_2\in H^4(BSU(2);R)$ under the inclusion $B(q,SU(2))\to BSU(2)$.
\end{introthm}

For $q\geq 2$, the 2-fold covering map $SU(2)\to SO(3)$ induces $B\Z/2$-principal bundles $B(q+1,SU(2))\to B(q,SO(3))$, and we conclude:

\renewcommand{\introthmref}{corl3}
\begin{introcorl}
Let $q\geq 2$. Then the maps $B(q,SU(2))\to B(q,SO(3))$ induced by the double cover $SU(2)\to SO(3)$ are homology isomorphisms with coefficients over a ring $R$ in which $2$ is invertible. 
\end{introcorl}

\begin{remark}
It is shown in \cite[Theorem 1]{Bergeron} that given a finitely generated nilpotent group $\Gamma$, a complex reductive linear group $G$ and $K\subset G$ a maximal compact subgroup, the inclusion induces a strong deformation retract $\Hom(\Gamma,G)\simeq \Hom(\Gamma, K)$. Applying this to $G=SL(2,\C),SL(3,\R)$ or $GL(2,\C)$ and $K=SU(2),SO(3)$ or $U(2)$ respectively, we get descriptions of $\Hom(\Gamma,G)$, and $B(q,G)$ for all $q\geq 2$, as well.
\end{remark}

\section*{Acknowledgments}
We would like to thank Jos\'e Manuel G\'omez for helpful suggestions about $U(2)$ and for motivating us to look into $SU(m)$ and $U(m)$ for $m>2$.

\section{Nilpotent subgroups of $SU(2)$}

In this section we record some purely algebraic facts about $SU(2)$, chief among them a description of all of its non-abelian nilpotent subgroups. Let $T\subset SU(2)$ denote the maximal torus
\[T=\left\{\left(\begin{matrix}
\lambda&0\\
0&\overline{\lambda}
\end{matrix}\right)
\;|\;\lambda\in \C \text{ and }|\lambda|=1\right\}\]
and let $w=\begin{pmatrix}
  0&-1\\
  1&0\end{pmatrix}$. Consider the commutator $[x,y]=xyx^{-1}y^{-1}$. A straightforward calculation shows:

\begin{lemma}\label{commutator-formulas}
Let $x,y\in T$. Then $wx=\overline{x}w$ and
\begin{enumerate}
\item $[x,y]=1$;
\item $[x,wy]=x^2$;
\item $[wx,y]=\overline{y}^2$;
\item $[wx,wy]=\overline{x}^2y^2$.
\end{enumerate}
\end{lemma}

\begin{defi}
Let $Q$ be a group, define inductively $\Gamma^1(Q)=Q$; $\Gamma^{q+1}(Q)=[\Gamma^q(Q),Q]$. This is called the descending central series of $Q$
\[\cdots\subset\Gamma^{q}(Q)\subset\cdots\subset\Gamma^2(Q)\subset \Gamma^1(Q) = Q.\] 
A group $Q$ is \emph{nilpotent} if $\Gamma^{q+1}(Q)=1$ for some $q$. The least such integer $q$ is called the \emph{nilpotency class} of $Q$.
\end{defi}

Let $\xi_n = \begin{pmatrix} e^{2\pi i/n} & 0 \\ 0 & e^{-2\pi
    i/n} \end{pmatrix} \in T$ be an $n$-th root of unity and let
$\mu_n$ stand for the subgroup generated by $\xi_n$. The general
quaternions \[Q_{2^{q+1}}:=\mu_{2^{q}}\cup w\mu_{2^{q}}\] have
nilpotency class $q$ for any $q\geq 1$. Indeed, for every $n\geq1$ the commutators $[\xi_{2^n},w]=\xi^2_{2^n}=\xi_{2^{n-1}}$, then one can show inductively that for $r\geq1$ the $(r+1)^{\textnormal{st}}$ stage in the descending central series is $\Gamma^{r+1}(Q_{2^{q+1}})=\mu_{2^{q-r}}$, and hence the descending central series of $Q_{2^{q+1}}$
\[1\subset\mu_2\subset\cdots\subset\mu_{2^{q-2}}\subset\mu_{2^{q-1}}\subset Q_{2^{q+1}}\]
has $q$ non-trivial terms.

\begin{lemma}\label{Lemma4}
Let $H\subset T\cup wT$ be a subgroup. Suppose $\Gamma^r(H)=\mu_{2^n}$ for some $n>0$. Then if $r> 2$, $\Gamma^{r-1}(H)=\mu_{2^{n+1}}$, and for $r=2$ there exists $t\in T$ such that $tHt^{-1}=Q_{2^{n+2}}$. 
\end{lemma}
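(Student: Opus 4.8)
The plan is to reduce everything to the abelian group $A := H\cap T$ and its powers, using only the commutator formulas of Lemma \ref{Lemma9}. First I would pin down the shape of $H$. Since $r\geq 2$ and $\Gamma^r(H)=\mu_{2^n}\neq 1$, the group $H$ is non-abelian, so it cannot be contained in $T$; hence it meets $wT$, and because $H/(H\cap T)$ embeds into $(T\cup wT)/T\cong\Z/2$, the subgroup $A=H\cap T$ has index two in $H$. Fixing any $wc\in H$ with $c\in T$, we get $H=A\cup wcA$. All further work takes place inside the abelian group $T$.

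Next I would compute the lower central series of $H$ by induction, showing $\Gamma^r(H)=A^{2^{r-1}}$ for $r\geq 2$, where $A^{2^{r-1}}$ denotes the group of $2^{r-1}$-th powers. For the base case, Lemma \ref{Lemma9} gives $[a,wca']=a^2$, $[wca,a']=\overline{a'}^2$ and $[wca,wca']=\overline{a}^2a'^2$ for $a,a'\in A$, while $[a,a']=1$; since $A$ is abelian and closed under inverses, all of these lie in $A^2$ and together generate it, so $[H,H]=A^2$. For the inductive step, once $\Gamma^r(H)\subseteq T$ is known, the only nontrivial commutators of $z\in\Gamma^r(H)$ with elements of $H$ are $[z,wca']=z^2$, whence $\Gamma^{r+1}(H)=(\Gamma^r(H))^2$ and therefore $\Gamma^r(H)=A^{2^{r-1}}$.

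The key step, and the one I expect to require the most care, is determining $A$ itself: a priori $A$ is merely \emph{some} subgroup of the circle $T$ and could be infinite. Here I would exploit the finiteness of $\Gamma^r(H)$. From $A^{2^{r-1}}=\mu_{2^n}$ every $a\in A$ satisfies $a^{2^{r-1}}\in\mu_{2^n}$, hence $a^{2^{n+r-1}}=1$; thus every element of $A$ has order dividing $2^{n+r-1}$, so $A\subseteq\mu_{2^{n+r-1}}$ is finite and therefore cyclic, say $A=\mu_{2^s}$. Raising to the $2^{r-1}$-th power then yields $A^{2^{r-1}}=\mu_{2^{s-(r-1)}}$ (the exponent being positive since $n>0$), and comparing with $\mu_{2^n}$ forces $s=n+r-1$, i.e. $A=\mu_{2^{n+r-1}}$.

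With $A$ identified, both conclusions follow quickly. If $r>2$ then $\Gamma^{r-1}(H)=A^{2^{r-2}}=\mu_{2^{(n+r-1)-(r-2)}}=\mu_{2^{n+1}}$, as claimed. If $r=2$ then $A=\mu_{2^{n+1}}$ and $H=\mu_{2^{n+1}}\cup wc\,\mu_{2^{n+1}}$; since squaring is surjective on $T$, choose $t\in T$ with $t^2=c$. Conjugation by $t$ fixes $A$ (as $T$ is abelian), and using $tw=w\overline{t}$ together with $\overline{t}=t^{-1}$ one computes $t(wc)\overline{t}=wc\,t^{-2}=w$. Hence $tH\overline{t}=\mu_{2^{n+1}}\cup w\mu_{2^{n+1}}=Q_{2^{n+2}}$, completing the proof.
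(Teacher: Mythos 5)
Your proof is correct, and it is organized rather differently from the paper's. The paper argues "top-down": it picks witness elements $z_0, wx_0\in H$ whose commutator is $\xi_{2^n}^k$ with $k$ odd, uses the formulas of Lemma \ref{Lemma9} to squeeze $H$ between $\mu_{2^{n+1}}\cup w\mu_{2^{n+1}}x_0$ and the subgroup generated by $z_0$ and $wx_0z_0$ (splitting into cases according to whether $z_0$ is diagonal or anti-diagonal), and then disposes of $r>2$ by saying the same argument applies without the anti-diagonal cases. You instead prove a closed-form formula $\Gamma^r(H)=(H\cap T)^{2^{r-1}}$ by induction on $r$, use finiteness of $\Gamma^r(H)$ to deduce that $A=H\cap T$ is a finite (hence cyclic) $2$-group, and then solve $A^{2^{r-1}}=\mu_{2^n}$ for $A$. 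What your route buys is uniformity: both conclusions of the lemma drop out of the single identity $A=\mu_{2^{n+r-1}}$, the "odd exponent" witness argument and the diagonal/anti-diagonal case split are replaced by the one-line observation that a subgroup of a finite cyclic group is cyclic, and the $r>2$ case is genuinely proved rather than sketched. The paper's version is more hands-on with explicit elements, which is what it reuses later (e.g.\ in the proof of Lemma \ref{Lemma7}, which cites "the proof of Lemma \ref{Lemma4}" for the shape $\mu_{2^n}\cup wx\mu_{2^n}$ of non-abelian subgroups of $T\cup wT$); your version also yields that shape, via $H=A\cup wcA$ with $A=\mu_{2^{n+r-1}}$, so nothing downstream is lost.
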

\begin{proof}
  We prove the case $r=2$. Suppose $\Gamma^2(H)=[H,H]=\mu_{2^{n}}$. Since the commutator generates $\mu_{2^n}$, there must exist $z_0,wx_0\in H$, with $x_0\in T$, such that $[z_0,wx_0]=\xi_{2^n}^k$ for some odd $k$ (otherwise we would have $[H,H] \subseteq \mu_{2^{n-1}}$). If $z\in H\cap T$, then $[z,wx_0]=z^2$ is a power of $\xi_{2^n}$, that is, $z$ is a power of $\pm\xi_{2^{n+1}}$ and $z\in \mu_{2^{n+1}}$. And for $wy\in H\cap wT$, where $y\in T$, the commutator $[wy,wx_0]=\overline{y}^2{x_0}^2=\xi_{2^n}^p$ for some $p>0$, by Lemma \ref{commutator-formulas}, hence $y=\pm\xi_{2^{n+1}}^px_0$. Therefore $H\subseteq \mu_{2^{n+1}}\cup w\mu_{2^{n+1}}x_0$.

  To get the equality, we turn back to the element $z_0\in H$ chosen above. There are two possibilities: 
\begin{itemize}
\item If $z_0 \in T$, then $z_0^2=[z_0,wx_0]=\xi_{2^{n}}^k$, so $z_0=\pm\xi_{2^{n+1}}^k$.
\item If $z_0 \in wT$, say $z_0=wz'_0$, then $\overline{z'_0}^2 x_0^2 = [wz'_0,wx_0] = \xi_{2^{n}}^k$ so that $z_0^\prime=\pm\xi_{2^{n+1}}^kx_0$.
\end{itemize}

In either case $z_0$ and $wx_0z_0$ generate $\mu_{2^{n+1}}\cup w\mu_{2^{n+1}}x_0$. Conjugating any element $w\xi_{2^{n+1}}^px_0$ by $t=\sqrt{x_0}\in T$ we obtain
\[\sqrt{x_0}w\xi_{2^{n+1}}^px_0\overline{\sqrt{x_0}}= w\overline{\sqrt{x_0}}\xi_{2^{n+1}}^px_0\overline{\sqrt{x_0}}= w\xi_{2^{n+1}}^p.\]
This is independent of the choice of the branch cut for $\sqrt{x_0}$. Hence $\Gamma^1(t H t^{-1})=t H t^{-1}\subseteq \mu_{2^{n+1}}\cup w\mu_{2^{n+1}}$.

For $r>2$ it's easier: the same arguments without the anti-diagonal matrix cases prove the result, since $\Gamma^{r-1}(H)\subset T$.
\end{proof}

\begin{lemma}\label{simult-diag}
Let $X,Y\in SU(2)$. Then 
\begin{enumerate}
\item Suppose $X\ne \pm I$. If $[X,Y]=I$ and $g\in SU(2)$ diagonalizes $X$, then it also diagonalizes $Y$.
\item If $[X,Y]=-I$ and $g\in SU(2)$ diagonalizes $X$, then $gXg^{-1}=\pm \xi_4$ and $gYg^{-1}\in wT$.
\end{enumerate}
\end{lemma}
\begin{proof}
  \begin{enumerate}
  \item When $X \neq \pm I$ its eigenvalues are distinct conjugate
    numbers. So $gYg^{-1}$ commutes with a non-scalar diagonal
    matrix and must also be diagonal.
  \item Let $g$ be a matrix that conjugates $X$ to a diagonal matrix
    with eigenvalues $\lambda,\overline{\lambda}$. Let
    $X^\prime=gXg^{-1}$ and $Y^\prime=gYg^{-1}$. Choose a non-zero
    vector $v\in E_\lambda$, the eigenspace of $X$ associated to
    $\lambda$. Since $[X^\prime,Y^\prime]=-I$ we get
    \[X^\prime Y^\prime v=-Y^\prime X^\prime v=-\lambda Y^\prime v.\]
    Hence $-\lambda$ is an eigenvalue of $X$ so that
    $-\lambda=\overline{\lambda}$, which implies $\lambda=\pm i\in\C$.
    Therefore $X^\prime=\pm\xi_4$. Now, $v\in E_i$ and
    $Y^\prime v\in E_{-i}$ tells us that $Y$ is conjugated by $g$ into
    an anti-diagonal matrix.
\end{enumerate}
\end{proof}

\begin{prop}\label{Prop1}
Let $H\subset SU(2)$ be a nilpotent subgroup. Then either $H$ is
abelian or there exists an $r \ge 2$ and an element $g\in SU(2)$ such that 
\[g Hg^{-1}=Q_{2^{r+1}}.\]
\end{prop}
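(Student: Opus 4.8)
The plan is to conjugate an arbitrary non-abelian nilpotent $H$ into the normalizer $N(T)=T\cup wT$ and then feed it to the inductive engine of Lemma~\ref{Lemma4}. Everything hinges on one elementary consequence of Lemma~\ref{Lemma1}(1): the centralizer in $SU(2)$ of a \emph{regular} element $c\neq\pm I$ is exactly the maximal torus through $c$, since anything commuting with $c$ is simultaneously diagonalizable with it.

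\textbf{Step 1 (the center).} I would first determine $Z(H)$. A nontrivial nilpotent group has nontrivial center, while if $Z(H)$ contained some $c\neq\pm I$ then $H\subseteq C_{SU(2)}(c)=T_c$ would be abelian, a contradiction. Hence $Z(H)=\{\pm I\}$ and in particular $-I\in H$. Writing $q\geq 2$ for the nilpotency class, the last nontrivial term $\Gamma^q(H)$ is central and nontrivial, so $\Gamma^q(H)=\{\pm I\}=\mu_2$. This is exactly the hypothesis $\Gamma^q(H)=\mu_{2^1}$ required by Lemma~\ref{Lemma4}.

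\textbf{Step 2 (into $N(T)$).} Since $\Gamma^q(H)=[\Gamma^{q-1}(H),H]=\{\pm I\}$ is nontrivial, some generating commutator equals $-I$, giving $X\in\Gamma^{q-1}(H)$ and $Y\in H$ with $[X,Y]=-I$. By Lemma~\ref{Lemma1}(2) I conjugate once so that $X=\pm\xi_4$ (regular, diagonal) and $Y\in wT$. The decisive point is that $X$ is normalized up to sign by all of $H$: for every $h\in H$ one has $[h,X]\in[H,\Gamma^{q-1}(H)]=\Gamma^q(H)=\{\pm I\}$, hence $hXh^{-1}=\pm X$, again a regular diagonal matrix. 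Because $X$ has distinct eigenvalues, $hXh^{-1}\in T$ forces $h$ to permute the two coordinate eigenlines of $X$, i.e.\ $h\in T\cup wT$. Therefore $H\subseteq N(T)=T\cup wT$.

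\textbf{Step 3 (the collapse) and the obstacle.} With $H\subseteq T\cup wT$ and $\Gamma^q(H)=\mu_{2^1}$, Lemma~\ref{Lemma4} applies repeatedly: its $r>2$ clause upgrades $\Gamma^{q-k}(H)=\mu_{2^{k+1}}$ one step at a time down to $\Gamma^2(H)=\mu_{2^{q-1}}$, after which the single $r=2$ clause yields $t\in T$ with $tH\overline{t}=Q_{2^{(q-1)+2}}=Q_{2^{q+1}}$; composing conjugations proves the claim with $r=q\geq 2$. I expect Step 2 to be the crux, the non-obvious idea being to descend to the \emph{penultimate} term $\Gamma^{q-1}(H)$ to produce an element $X$ that is simultaneously a $-I$-commutator (so Lemma~\ref{Lemma1}(2) pins it to $\pm\xi_4$) and central modulo $\{\pm I\}$ (so $H$ can only move it to $\pm X$, trapping $H$ in $N(T)$); an alternative route through a self-centralizing maximal abelian normal subgroup also works but is less direct given the lemmas at hand. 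Steps~1 and~3 are then bookkeeping, the only delicate point being the index count in Step~3, namely that exactly $q-2$ applications of the $r>2$ case precede the one $r=2$ application.
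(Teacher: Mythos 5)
Your proof is correct and follows essentially the same route as the paper: establish $Z(H)=\{\pm I\}$ so that the last nontrivial term of the lower central series is $\mu_2$, use Lemma~\ref{Lemma1} applied to a suitable element $X$ with $[X,h]\in\{\pm I\}$ for all $h\in H$ to conjugate $H$ into $T\cup wT$, and then feed the result to Lemma~\ref{Lemma4} inductively. The only cosmetic differences are that you derive the fact that non-abelian subgroups of $SU(2)$ have center in $\{\pm I\}$ from Lemma~\ref{Lemma1}(1) where the paper cites a reference, and you select $X$ via a commutator equal to $-I$ rather than as an arbitrary non-central element of $\Gamma^{q-1}(H)$; both choices lead to the identical argument.
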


In particular, any non-abelian nilpotent subgroup of $SU(2)$ is automatically finite.

\begin{proof}
Suppose $H$ is non-abelian. Then, there exists a unique $r>1$ such that $\Gamma^{r+1}(H)=I$ and $\Gamma^r(H)\ne I$. This implies that $\Gamma^r(H)$ sits inside the center of $H$. Non-abelian subgroups of $SU(2)$ have center contained in $\{\pm I\}$ (for a proof of this statement see \cite[Example 2.22, p.~16]{Villarreal}) and therefore $\Gamma^r(H)=[H,\Gamma^{r-1}(H)]=\mu_2$. By hypothesis, there exists a non-central element $X$ in $\Gamma^{r-1}(H)$. Then for every $h\in H$, the commutator $[X,h]$ is inside $\{\pm I\}$. Let $g\in SU(2)$ be an element that diagonalizes $X$. By Lemma \ref{simult-diag}, for every $h\in H$, $ghg^{-1}\in T\cup wT$ and thus $H$ can be conjugated to a subgroup of $T\cup wT$. Now, $\Gamma^r(gHg^{-1})=\mu_2$ and applying  Lemma \ref{Lemma4} inductively we get that there exists $t\in T$, such that $tgH(tg)^{-1}=Q_{2^{r+1}}$. 
\end{proof}

In the next section we will need to know a little bit more about the generalized quaternion groups: their normalizers in $SU(2)$ and a list of their subgroups.

\begin{lemma}\label{Lemma6}
The normalizer of $Q_{2^n}$ in $SU(2)$ is:
\begin{enumerate}
\item The Binary Octahedral group which is generated by
  $\left\{\xi_8,w,\frac{1}{2}\left(\begin{matrix}1+i&-1+i\\1+i&1-i
      \end{matrix}\right)\right\}$ for $n=3$;
\item $Q_{2^{n+1}}$ for $n>3$.
\end{enumerate}
\end{lemma}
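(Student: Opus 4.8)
The plan is to study the conjugation action of $N := N_{SU(2)}(Q_{2^n})$ on $Q_{2^n}$, using that a non-abelian subgroup of $SU(2)$ has center contained in $\{\pm I\}$. First I would pin down the centralizer: since $Q_{2^n}$ contains the diagonal element $\xi_{2^{n-1}} \ne \pm I$, Lemma \ref{Lemma1}(1) forces $C_{SU(2)}(Q_{2^n}) \subseteq T$, and since $Q_{2^n}$ also contains $w$, Lemma \ref{Lemma8}(2) gives $w t w^{-1} = \overline{t}$, so $C_{SU(2)}(Q_{2^n}) = \{t \in T : \overline{t} = t\} = \{\pm I\}$. Conjugation therefore induces an injection $N/\{\pm I\} \hookrightarrow \mathrm{Aut}(Q_{2^n})$. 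The two cases $n > 3$ and $n = 3$ then diverge precisely because the cyclic subgroup $\mu_{2^{n-1}}$ is characteristic in $Q_{2^n}$ in the former case but not the latter.

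For $n > 3$ I would first observe that every element of $w\mu_{2^{n-1}}$ squares to $-I$ and so has order $4$, whereas $\xi_{2^{n-1}}$ has order $2^{n-1} \ge 8$; hence $\mu_{2^{n-1}}$ is the unique cyclic subgroup of its order and is characteristic, so $N$ normalizes it. Because conjugation preserves eigenvalues, any $g$ normalizing $\mu_{2^{n-1}}$ must carry $\xi_{2^{n-1}}$ to $\xi_{2^{n-1}}^{\pm 1}$, and together with Lemma \ref{Lemma1}(1) and Lemma \ref{Lemma8}(2) this yields $N_{SU(2)}(\mu_{2^{n-1}}) = T \cup wT$. It then remains a short calculation with Lemma \ref{Lemma8}: a diagonal $t$ normalizes $Q_{2^n}$ iff $t^2 \in \mu_{2^{n-1}}$ (from part (3)), i.e. $t \in \mu_{2^n}$, and an element $ws$ does iff $s^2 \in \mu_{2^{n-1}}$ (from parts (2) and (4)), i.e. $s \in \mu_{2^n}$. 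Hence $N = \mu_{2^n} \cup w\mu_{2^n} = Q_{2^{n+1}}$.

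The case $n = 3$ is the one I expect to be the main obstacle, since it is exactly the failure of $\mu_4$ to be characteristic that produces the octahedral exception: $Q_8$ has three cyclic subgroups of order $4$, namely $\langle \xi_4 \rangle$, $\langle w \rangle$ and $\langle w\xi_4 \rangle$, so the torus reduction above breaks down. Instead I would combine the injection $N/\{\pm I\} \hookrightarrow \mathrm{Aut}(Q_8) \cong S_4$, which gives $|N| \le 48$, with a matching subgroup. Passing to the double cover $SU(2) \to SO(3)$, the image of $Q_8$ is the Klein four group $V_4 \trianglelefteq S_4$, so $Q_8$ is normal in the binary octahedral group $2O$ of order $48$; thus $2O \subseteq N$, and comparing orders forces $N = 2O$ (in particular every automorphism of $Q_8$ is realized by conjugation). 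To recover the stated generators I would check that $\langle \xi_8, w \rangle = \mu_8 \cup w\mu_8 = Q_{16}$ is a Sylow $2$-subgroup of $2O$, and that $M = \tfrac12\left(\begin{smallmatrix} 1+i & -1+i \\ 1+i & 1-i \end{smallmatrix}\right)$ lies in $SU(2)$ with trace $1$, hence has order $6$, with conjugation by $M$ cyclically permuting $\{\pm\xi_4, \pm w, \pm w\xi_4\}$; adjoining this order-$6$ element to $Q_{16}$ generates all of $2O$. This last identification is routine but fiddly, and it is really the bookkeeping special to $n = 3$, rather than any conceptual difficulty, that carries the weight of the proof.
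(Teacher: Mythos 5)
Your argument is correct, but it takes a different route from the paper's main proof, which is geometric: the paper identifies $SU(2)$ with the unit quaternions, notes that conjugation acts on the purely imaginary quaternions as the double cover $SU(2)\to SO(3)$, and observes that the imaginary parts of the elements of $Q_{2^n}$ form $2^{n-2}$ points on the $i$-axis together with a regular $2^{n-1}$-gon in the $jk$-plane. The normalizer is then the set of unit quaternions whose rotations preserve this configuration; for $n>3$ the polygon and the axis must be preserved separately, yielding exactly the $2^{n+1}$ quaternions of $Q_{2^{n+1}}$, while for $n=3$ the configuration is a regular octahedron, whose extra symmetry produces the binary octahedral group. Your $n>3$ case is essentially the paper's own ``alternative matrix-based proof'' given in the remark after the lemma: there the key step is that $g\xi_4g^{-1}$ cannot be anti-diagonal because $g\xi_8g^{-1}$ has order $8$ while anti-diagonal elements square to $-I$ --- the same order observation underlying your claim that $\mu_{2^{n-1}}$ is characteristic. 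Your $n=3$ case is genuinely different and arguably easier to make fully rigorous: sandwiching $N$ between $2O$ (via normality of $Q_8$ in the preimage of $S_4$) and the bound $|N|\le 2\,|\mathrm{Aut}(Q_8)|=48$ avoids having to argue that the octahedron has no orientation-preserving symmetries beyond the $24$ rotations, which is the one point the geometric proof leaves to the reader. The price is the explicit check that $M$ lies in $N$ at all, i.e.\ that conjugation by $M$ cyclically permutes $\{\pm\xi_4,\pm w,\pm w\xi_4\}$; you have flagged this computation but not carried it out, and it is needed both for $M\in N$ and to recover the stated generating set, so it should appear in a final write-up.
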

\begin{proof}
  For this it is more convenient to think of $SU(2)$ as the group of unit quaternions, which enables one to think geometrically in terms of rotations in the space of purely imaginary quaternions, a space we shall identify with $\mathbb{R}^3$. If $q = \cos(\theta) + \sin(\theta) \hat{q}$ where $\hat{q}$ is a unit length imaginary quaternion, then $v \mapsto qvq^{-1}$ maps imaginary quaternions to imaginary quaternions and is a rotation around $\hat{q}$ of angle $2\theta$. An arbitrary quaternion can be written as $a + v$ where $a$ is the real part and $v$ is purely imaginary. We have $q(a+v)q^{-1} = a + qvq^{-1}$, so that conjugation by $q$ preserves the real part and rotates the imaginary part. This rule that associates a rotation of $\mathbb{R}^3$ to each unit quaternion is the double cover homomorphism $SU(2) \to SO(3)$.

  The normalizer of $Q_{2^n}$ consists of those unit quaternions whose corresponding rotations are the orientation preserving symmetries of the set of imaginary parts of the elements of $Q_{2^n}$. As quaternions, the elements of $Q_{2^n}$ are $\cos(2 \pi m/2^{n-1}) + i \sin(2 \pi m/2^{n-1})$ and $j\cos(2 \pi m/2^{n-1}) + k\sin(2 \pi m/2^{n-1})$ for $m=0,1,\ldots,2^{n-1}-1$. The imaginary parts are then $2^{n-2}$ points on the $i$-axis together with the vertices of a regular $2^{n-1}$-gon in the $jk$-plane. For $n>3$, any orientation preserving isometry of $\mathbb{R}^3$ preserving this set of points must separately preserve the $2^{n-1}$-gon and the points on the $i$-axis, and thus must be a rotation of $\mathbb{R}^3$ whose restriction to the plane of the $2^{n-1}$-gon is a symmetry of the $2^{n-1}$-gon. Each of the $2^n$ elements of the dihedral group of symmetries of the $2^{n-1}$-gon extends to a unique rotation of $\mathbb{R}^3$ (the rotations in the dihedral group extend to rotations around the $i$-axis, while the reflections extend to rotations of angle $\pi$ around the axis of the reflection), and each such rotation of $\mathbb{R}^3$ has two quaternions inducing it; these $2^{n+1}$ quaternions are easily seen to be the elements of $Q_{2^{n+1}}$.

  Now, if $n=3$, there is extra symmetry because the $2^{n-1}$-gon in the $jk$-plane is just a square, and the $2^{n-2} = 2$ points on the $i$-axis together with that square form a regular octahedron. Therefore the normalizer of $Q_{2^3}$ is the preimage in $SU(2)$ of the group of orientation preserving symmetries of the octahedron in $SO(3)$; this preimage is known as the Binary Octahedral group. The generator $\frac{1}{2} \left( \begin{matrix} 1+i & -1+i \\ 1+i & 1-i \end{matrix} \right)$ listed in the statement of the proposition (which is the only generator not in $Q_{2^4}$), corresponds to a rotation of $2\pi/3$ around a line connecting the centers of two opposite faces of the octahedron. This extra symmetry does \emph{not} separately preserve the $i$-axis and the square $\{\pm j, \pm k\}$.
\end{proof}


 

\begin{lemma}\label{Lemma7}
Let $n\geq 2$. Then
\begin{enumerate}
\item The abelian subgroups of $Q_{2^{n+1}}$ are all subgroups of $\mu_{2^{n}}$ or $\{\pm I, \pm wx\}$ where $x$ is an element of $\mu_{2^{n}}$.

\item The non-abelian subgroups of $Q_{2^{n+1}}$ are of the form $\mu_{2^{r}}\cup wx\mu_{2^{r}}$ where $x=(\xi_{2^{n}})^{p}$, for some $2\leq r\leq n$ and some $0 \le p < 2^{n-r}$·
\end{enumerate}
\end{lemma}

\begin{proof}\leavevmode
  \begin{enumerate}
  \item By Lemma \ref{commutator-formulas}, any subgroup containing an element of $\mu_{2^{n}}-\{\pm I\}$ and one of $w\mu_{2^{n}}$ has non-trivial commutator.

  \item By the proof of Lemma \ref{Lemma4} any non-abelian nilpotent subgroup of $T\cup wT$ has the form $\mu_{2^r}\cup wx\mu_{2^r}$ for some $x\in T$ and $r\geq 2$. Therefore the non-abelian subgroups of $Q_{2^{n}}$ are $\mu_{2^{r}}\cup wx\mu_{2^{r}}$ where $x$ ranges over representatives for the cosets of $\mu_{2^{r}}$ in $\mu_{2^{n}}$.
  \end{enumerate}
\end{proof}

\section{Spaces of  homomorphisms from nilpotent groups to $SU(2)$}

Now that we know what all the non-abelian nilpotent subgroups of $SU(2)$ are, we can describe spaces of homomorphisms from discrete nilpotent groups into $SU(2)$.

\begin{theorem}\label{main}
Let $\Gamma$ be a discrete nilpotent group, and for every $q\geq 3$ let $\Epi(\Gamma, Q_{2^q})$ denote the set of surjective homomorphisms $\Gamma\to Q_{2^q}$. Then there is a homeomorphism
\[\Hom(\Gamma,SU(2))\cong \Hom(\Gamma^\ab,SU(2))\sqcup\bigsqcup_{q\geq 3}\bigsqcup_{\mathcal{O}}PU(2)\]
where $\mathcal{O}$ runs through the orbits $\Epi(\Gamma,Q_{2^q})/N_{SU(2)}(Q_{2^q})$ induced by the action of conjugation of the normalizer in $SU(2)$.
\end{theorem}
\begin{proof}
Let $\rho\in \Hom(\Gamma,SU(2))-\Hom(\Gamma^{ab},SU(2))$. By Proposition \ref{Prop1}, $\rho(\Gamma)$ is conjugate to $Q_{2^q}$ for some $q\geq 3$. Then, $\rho$ factors through $\Gamma\xrightarrow{\varphi}Q_{2^q}\hookrightarrow SU(2)\xrightarrow{c_g}SU(2)$, where $\varphi$ is a surjective homomorphism and $c_g$ is conjugation by some $g\in SU(2)$. Thus, we have a surjective map
\[SU(2)\times \bigsqcup_{q\geq 3}\Epi(\Gamma, Q_{2^q})\to \Hom(\Gamma,SU(2))-\Hom(\Gamma^{ab},SU(2))\] 
given by $(g,\varphi)\mapsto g\varphi g^{-1}$. Now two different elements $(g,\varphi)$ and $(h,\psi)$ have the same image if and only if $\varphi,\psi\in \Epi(\Gamma, Q_{2^q})$ for the same $q\geq 3$, and $g^{-1}h\psi h^{-1}g=\varphi$; therefore $g^{-1}h\in N_{SU(2)}(Q_{2^q})$. Thus, after modding out by the action of $N_{SU(2)}(Q_{2^q})$ by conjugation over $\Epi(\Gamma,Q_{2^q})$, we can assume that $\varphi=\psi$, which implies $g^{-1}h\in Z(\langle Q_{2^q} \cup \{g^{-1}h\}\rangle)=\{\pm I\}$ (see the proof of Lemma \ref{simult-diag}). It follows that $h=-g$. Modding out by the left translation action of $\{\pm I\}$ on $SU(2)$ gives the desired homeomorphism.
\end{proof}

Our main family of examples will be the finitely generated free nilpotent groups. By definition, these are the groups $F_n/\Gamma^k_n$, where $\Gamma^k_n := \Gamma^k(F_n)$.

\begin{corl}\label{Theorem1}
Let $q\geq 2$ and $n\geq 1$. Then
\[\Hom(F_n/\Gamma^{q+1}_n,SU(2))\cong \Hom(\Z^n,SU(2))\sqcup \bigsqcup_{C(n,q+1)}PU(2)\]
where 
\[C(n,q+1)=\frac{2^{n-2}(2^n-1)(2^{n-1}-1)}{3}+(2^n-1)(2^{(q-2)(n-1)}-1)2^{2n-3}.\]
\end{corl}
\begin{proof}

In order to have an epimorphism $F_n/\Gamma^{q+1}_n\to Q_{2^r}$ we must have $r\leq q$. And when $r \le q$, we need only choose $n$-tuples in $Q_{2^r}$ that generate the whole group. Let us identify $\Epi(F_n/\Gamma^q_n,Q_{2^r})$ with the subset of $(Q_{2^r})^n$ consisting of $n$-tuples whose entries generate $Q_{2^r}$. The action of $N_{SU(2)}(Q_{2^{r}})$ on $\Epi(F_n/\Gamma^q_n,Q_{2^r})$ is free once we take quotient by the center of the group, which acts trivially. By Theorem \ref{main}, the number of connected components homeomorphic to $PU(2)$ (which arise from non-abelian representations) is
\[\sum_{r=3}^{q+1}\frac{|\Epi(F_n/\Gamma^q_n,Q_{2^r})|}{|N_{SU(2)}(Q_{2^r})|/2}.\]
By Lemma \ref{Lemma6} we know the order of $N_{SU(2)}(Q_{2^r})$. It remains to count the number of elements in $\Epi(F_n/\Gamma^q_n,Q_{2^r})$, for which we use an inclusion--exclusion argument: a tuple generates $Q_{2^r}$ if and only if its elements don't all come from a single maximal proper subgroup of $Q_{2^r}$. So if $M_1, M_2, \ldots, M_m$ are the maximal subgroups of $Q_{2^r}$, we have
\[|\Epi(F_n/\Gamma^q_n,Q_{2^r})| = |Q_{2^r}|^n- \sum_i |M_i|^n + \sum_{i,j} |M_i \cap M_j|^n - \sum_{i,j,k} |M_i \cap M_j \cap M_k|^n + \cdots\]
For $Q_8$, there are three maximal subgroups, all isomorphic to $\mathbb{Z}/4$. The intersection of any two of them is the center of $Q_8$, ${\pm I}$. So we get
\[|\Epi(F_n/\Gamma^q_n,Q_{8})| = 8^n - 3 \cdot 4^n + 3 \cdot 2^n - 2^n = 2^{n+1}(2^n-1)(2^{n-1}-1).\]
For $r \ge 4$, it follows from Lemma \ref{Lemma7} that things are pretty much the same: there are just three maximal subgroups of $Q_{2^r}$, namely, $\mu_{2^{r-1}}$, $\mu_{2^{r-2}} \cup w\mu_{2^{r-2}}$ and $\mu_{2^{r-2}} \cup w \xi_{2^{r-1}} \mu_{2^{r-2}}$. And the intersection of any pair of them is $\mu_{2^{r-2}}$, so we get
\[|\Epi(F_n/\Gamma^q_n,Q_{2^r})| = (2^r)^n - 3 \cdot (2^{r-1})^n + 3 \cdot (2^{r-2})^n - (2^{r-2})^n = 2^{(r-2)n+1}(2^n-1)(2^{n-1}-1),\]
and a quick calculation verifies the formula claimed.
\end{proof}

\begin{remark}
As noted in \cite{Villarreal}, the space $\Hom(F_n/\Gamma^3_n,SU(2))$ is the same as the space of almost commuting tuples $B_n(SU(2),\{\pm I\})$. In \cite{Ad4} they describe this space, and it agrees with our computation for $q=2$. 
\end{remark}

We finish this section with a geometric application of Theorem \ref{main}.

\begin{corl}
For any $q\geq 2$, consider the 3-manifolds $M_q=SU(2)/Q_{2^q}$. Then there are $3+2^{q-2}$ isomorphism classes of flat principal $SL(2,\C)$-bundles over $M_q$.
\end{corl}

\begin{proof}
  For any Lie group $G$, the number of isomorphism classes of flat $G$-bundles is equal to the number of points in the space $\Rep(\pi_1(M_q),G) = \Hom(\pi_1(M_q), G)/G$ (where the orbits are taken modulo the conjugation action of $G$). Our results will let us easily count this number for $G = SU(2)$, and then we shall argue that the answer is the same for $G=SL(2,\C)$.

  \medskip

  According to Theorem \ref{main}, there are two types of components in $\Hom(\pi_1(M_q), SU(2))$: the components of $\Hom(\pi_1(M_q)^\ab, SU(2))$, and several components homeomorphic to $PU(2)$; each of the latter is a conjugacy class of some epimorphism $\pi_1(M_q) \to Q_{2^r}$. Since $\pi_1(M_q)^\ab \cong \Z/2 \times \Z/2$ and the only element of order $2$ in $SU(2)$ is $-I$ (and the same is true in $SL(2, \C)$), $\Hom(\pi_1(M_q)^\ab,SU(2))$ consists of $4$ points. The other components contribute a point each to $\Rep(\pi_1(M_q), SU(2))$, so it is a finite discrete space of cardinality given by:
  \[|\Hom(Q_{2^q}^\ab,SU(2))|+\sum_{r=3}^{q}|\Epi(Q_{2^q},Q_{2^r})/N_{SU(2)}(Q_{2^r})|.\]
  
Let's show that for $r\geq 4$ we have $|\Epi(Q_{2^q},Q_{2^r})/N_{SU(2)}(Q_{2^r})|=2^{2r-3}/2^r=2^{r-3}$, and that for $r=3$, $|\Epi(Q_{2^q},Q_{8})/N_{SU(2)}(Q_{8})|=1$ ---which gives the stated count, but for $G=SU(2)$.

Let $r \ge 4$. A homomorphism $Q_{2^q} \to Q_{2^r}$ is determined by where it sends the generators $w \mapsto x$ and $\xi_{2^{q-1}} \mapsto y$, and we must count the number of choices for which the homomorphism is well-defined and surjective. First of all, since $x^4 = I$ we cannot have $x \in \mu_{2^{r-1}}$. Indeed, in that case we would need to have $y \in w \mu_{2^{r-1}}$ (to avoid having $\langle x, y \rangle \subseteq \mu_{2^{r-1}}$), but then $\langle x, y \rangle \cap \mu_{2^{r-1}} = \langle x, y^2 \rangle \subseteq \mu_4 \subsetneq \mu_{2^{r-1}}$. So $x = wx'$ for $x' \in \mu_{2^{r-1}}$ and there are $2^{r-1}$ choices for $x'$.

We can either have $y \in \mu_{2^{r-1}}$ or $y \in w \mu_{2^{r-1}}$. In the first case, $y$ alone is responsible for generating $\mu_{2^{r-1}}$, which leaves $2^{r-2}$ possibilities for $y$. In the second case, say $y=wy'$ with $y' \in \mu_{2^{r-1}}$, we have $\langle x, y \rangle \cap \mu_{2^{r-1}} = \langle xy \rangle = \langle - \overline{x'} y' \rangle$, which again gives $2^{r-2}$ possibilities for $y'$. (It remains to check all these possibilities are realized which is straightforward using the presentation $Q_{2^r} = \langle \xi_{2^{r-1}}, w \mid w^4 = 1, \xi_{2^{r-1}}^{2^{r-1}} = 1, w\xi_{2^{r-1}} = \xi_{2^{r-1}}^{-1}w \rangle$.) All together we have $2^{2r-3}$ epimorphisms. Again the normalizer acts freely after modding out the center which acts trivially, so $|\Epi(Q_{2^q},Q_{2^r})/N_{SU(2)}(Q_{2^r})|=2^{2r-3}/2^r=2^{r-3}$ as claimed.

The case $r=3$ is straightforward. One can verify that there are 24 pairs $(x,y)$ that generate $Q_8$, all of these pairs define homomorphisms from $Q_{2^q}$, and $N_{SU(2)}(Q_8)/\{\pm I\}$ acts simply transitively on the 24 pairs.

\medskip

Now it only remains to show that the count is the same for $G=SL(2,\C)$. By \cite[Theorem 1]{Bergeron}, the inclusion $SU(2) \to SL(2,\C)$ induces a homotopy equivalence $\Hom(\pi_1(M_q),SU(2)) \simeq \Hom(\pi_1(M_q),SL(2,\C))$, so in particular these spaces have the same number of connected components. Since both $SU(2)$ and $SL(2,\C)$ are connected, this implies that the corresponding $\Rep$ spaces also have the same number of connected components, so we need only show that for $G=SL(2,\C)$ each component of $\Rep(\pi_1(M_q), G)$ is a single point, as was the case for $G = SU(2)$. Consider the map $\Rep(\pi_1(M_q), SU(2)) \to \Rep(\pi_1(M_q), SL(2,\C))$ induced by the inclusion. Since any homomorphism from the finite group $\pi_1(M_q)$ to $SL(2,\C)$ can be conjugated into $SU(2)$ (by the classical averaging argument producing a $\pi_1(M_q)$-invariant Hermitian metric), this map is surjective. Thus the components of $\Rep(\pi_1(M_q), SL(2,\C))$ must also be points, as required.
\end{proof}

\subsection{Spaces of homomorphisms from abelian groups to $SU(2)$}

For a complete description of spaces of homomorphisms from nilpotent groups to $SU(2)$,  it only remains to describe the space $\Hom(A,SU(2))$ for a discrete abelian group $A$.

\begin{prop}\label{hps}
Let $A$ be a discrete abelian group. There is a pushout square which is also a homotopy pushout square,
\[\xymatrix{SU(2)/N(T)\times \Hom(A,\{\pm I\})\ar[r]\ar[d]&SU(2)/T\times_{\Z/2} \Hom(A,T)\ar[d]\\
\Hom(A,\{\pm I\})\ar[r]&\Hom(A,SU(2))}\] 
where $T$ is a maximal torus of $SU(2)$, $N(T)$ its normalizer; the horizontal arrows are induced by the inclusions $\{\pm I\}\hookrightarrow T$, $\{\pm I\}\hookrightarrow SU(2)$; the vertical arrows are induced by conjugation, and the action of $\Z/2$ on $SU(2)/T\times \Hom(A,T)$ is diagonal, given by the antipodal action on $SU(2)/T\cong S^2$ and by complex conjugation on $\Hom(A,T)$.
\end{prop}

\begin{proof}
  It is enough to show the square is a pushout since $SU(2)/T \times \Hom(A, T)$ has a $\Z/2$-CW-complex structure for which $SU(2)/T \times \Hom(A, \{\pm I\})$ is the sub-complex of fixed points, so the top map is a cofibration.

  The right vertical map is the one induced by the map $G/T \times \Hom(A,T) \to \Hom(A, SU(2))$ given by $(gT, \phi) \mapsto (a \mapsto g \phi(a) g^{-1})$. This descends to the quotient by the described $\Z/2$-action because that action has a more conceptual description, as can easily be checked: $\Z/2$ arises here as the Weyl group, $N(T)/T$; and it acts on $\Hom(A,T)$ by conjugation on the codomain of the homomorphisms, namely $nT \cdot \phi = a \mapsto n \phi(a) n^{-1}$ for $nT \in N(T)/T$, $\phi \in \Hom(A, T)$, and it acts on $G/T$ by $nT \cdot gT = gn^{-1}T$ for $nT \in N(T)/T$, $gT \in G/T$.
  
  In $SU(2)$ every abelian subgroup is conjugate to a subgroup of $T$, and therefore the right vertical map is surjective. Let's figure out how non-injective it is. If $(gT,\phi)$ and $(g'T, \phi')$ have the same image in $\Hom(A, SU(2))$, then $h \phi'(a) h^{-1} = \phi(a)$ for all $a \in A$ where $h = g^{-1} g'$. This implies that $\phi(A) \subseteq T \cap h T h^{-1}$. There are two cases:

  \begin{itemize}
  \item If $T = h T h^{-1}$, then $h \in N(T)$ and $hT \cdot (g'T,\phi') = (gT, \phi)$, so the two pairs have the same image in the quotient by the Weyl group and we don't really have an instance of non-injectivity.
  \item If $T \neq h T h^{-1}$, then $T \cap h T h^{-1} = \{\pm I\}$, since geometrically $T$ and $hTh^{-1}$ are two great circles on $SU(2) \cong S^3$. So, in this case, $\phi \colon A \to \{\pm I\}$ which implies in turn that $\phi' = \phi$, but leaves $g$ and $g'$ unconstrained.
  \end{itemize}

  In summary, all the fibres of the right vertical map are singletons except for the fibres above points $\phi \in \Hom(A,\{\pm I\})$, which are of the form $SU(2)/N(T) \times \{\phi\}$. The pushout of the top and left maps in the square precisely collapses each of those fibres down to a single point, so the pushout is the bottom right corner as desired. 
\end{proof}

\begin{remark}
The (homotopy) pushout square for $A=\Z^n$ first appeared in \cite{lowdim} and indeed, the proof above is a straightforward adaptation of our proof there. The homotopy type of $\Hom(\Z^n,SU(2))$ after one suspension has been described independently in \cite{Ad4,Baird} and \cite{Crabb} (in fact, the title of this paper was motivated by the one in \cite{Baird}). To be precise, for $n\geq 2$
\[\Sigma \Hom(\Z^n,SU(2))\simeq\Sigma\left((S^{3})^{\vee n} \vee \bigvee_{k=2}^n 
\left((\RP^2)^{k\lambda_2}/s_k(\RP^2)\right)^{\vee \binom{n}{k}}\right),\] 
where $\lambda_2$ is the canonical line bundle over $\RP^2$, $X^\lambda$ is the Thom space of a vector bundle $\lambda$ over $X$, and $s_k$ is the zero section of $k\lambda_2$. For an arbitrary finitely generated abelian group $A$, the homotopy type of $\Hom(A,SU(2))$ after one suspension is given in more detail in \cite[Example 4.3]{AdemGomez}.
\end{remark}

\begin{exam}\label{hom-Q}
  As an example of a non-finitely generated group, take $A = \Q$ ---with the discrete topology. Since every element of $\Q$ is divisible by $2$, there are no non-trivial homomorphisms $\Q \to \{\pm I\}$, which simplifies the pushout somewhat. The space $\Hom(\Q, T)$ is the Pontryagin dual of $\Q$ as a discrete group, which is well-known to be $\mathbb{A}/\Q$. Here $\mathbb{A}$ is the group of adeles, built out of the $p$-adic numbers $\Q_p$ and $p$-adic integers $\Z_p$, as the subgroup of $\R \times \prod_p \Q_p$ consisting of sequences such that for almost all primes $p$, the $p$-th coordinate lies in $\Z_p$. The homomorphism $\psi_a \colon \Q \to T$ corresponding to $a = (a_{\infty}, (a_p)_p) \in \R \times \prod_p \Q_p$ is $\psi_a(r) = e^{-2\pi i a_{\infty} r} \prod_p e^{- 2\pi i \{a_p r\}_p}$, where $\{x\}_p$ denotes the $p$-adic fractional part of $x \in \Q_p$, i.e., the rational number with $p$-th power denominator that one obtains as the part of the $p$-adic expansion of $x$ coming after the decimal point. From that description of $\psi_a$, one sees that the complex conjugation action on $\Hom(\Q,T)$ corresponds to sending $a$ to $-a$ in $\mathbb{A}/\Q$. Putting this all together, we get that the space $\Hom(\Q, SU(2))$ is homeomorphic to $\left(S^2 \times_{\Z/2} \mathbb{A}/\Q\right)\!/\RP^2$, where the copy of $\RP^2$ inside $S^2 \times_{\Z/2} \mathbb{A}/\Q$ that gets collapsed to a point is $S^2 \times_{\Z/2} \{0\}$.

  By the same token, for $A = \Q^n$ we get $\Hom(\Q^n, SU(2)) \cong \left(S^2 \times_{\Z/2} (\mathbb{A}/\Q)^n\right)\!/\RP^2$, where the copy of $\RP^2$ collapsed to a point is $S^2 \times_{\Z/2} \{(0,\ldots,0)\}$.
\end{exam}

\begin{exam}
  As a more exotic example of Theorem \ref{main}, consider the Heisenberg group over the rationals:
  \[ H_\Q := \left\{
      \begin{pmatrix}
        1 & x & y \\
        0 & 1 & z \\
        0 & 0 & 1 \\
      \end{pmatrix} \middle| x, y, z \in \Q
    \right\}. \]
  There are no epimorphisms $H_\Q \to Q_{2^q}$ for any $q \ge 3$ at all. Indeed, any element in $H_\Q$ has a square root, and the same is not true of some elements in $Q_{2^q}$. Therefore Theorem \ref{main} implies that $\Hom(H_\Q,SU(2)) \cong \Hom(H_\Q^\ab, SU(2))$. We have $H_\Q^\ab \cong \Q \oplus \Q$, so that $\Hom(H_\Q^\ab, SU(2)) \cong \Hom(\Q \times \Q, SU(2))$. From Example \ref{hom-Q}, we get $ \Hom(H_\Q, SU(2)) \cong \left(S^2 \times_{\Z/2} (\mathbb{A}/\Q)^2 \right)\!/\RP^2$.
\end{exam}
\section{Spaces of homomorphisms from $F_n/\Gamma^q_n$ to $SO(3)$ and $U(2)$} \label{sec:so3-u2}

Corollary \ref{Theorem1} can be used to describe the spaces of nilpotent $n$-tuples in $SO(3)$ and $U(2)$. We study first $SO(3)$, which we will identify with $SU(2)/\{\pm I\}=PU(2)$. 

Let $\pi\colon SU(2)\to PU(2)$ denote the quotient homomorphism,  and $\pi_*\colon\Hom(F_n/\Gamma^{q+1}_n,SU(2))\to \Hom(F_n/\Gamma^{{q+1}}_n,PU(2))$ the induced map. We claim that for any $q\geq 2$, the image of $\pi_*$ is precisely $\Hom(F_n/\Gamma^{q}_n,PU(2))$. Clearly any commuting tuple in $SU(2)$ is mapped to a commuting tuple in $PU(2)$. Now, let $H$ be a non-abelian subgroup of $SU(2)$ of nilpotency class $q$. As showed before this implies that $\Gamma^{q}(H)=\{\pm I\}$ and hence $\Gamma^q(\pi(H))=I$. Therefore $\pi(H)$ has nilpotency class $q-1$. It remains to show that any nilpotent subgroup in $PU(2)$ of nilpotency class $(q-1)$ has a lift to a nilpotent subgroup of $SU(2)$ of nilpotency class $q$. This follows from the fact that $\pi$ is an epimorphism and the preimage of any trivial commutator is $\{\pm I\}$, and thus central in $SU(2)$. Therefore, for any $q\geq 2$ 
 \[\pi_*\colon\Hom(F_n/\Gamma^{q+1}_n,SU(2))\to \Hom(F_n/\Gamma^{q}_n,PU(2))\]
 is surjective as claimed.
  
A result of W. M. Goldman \cite[Lemma 2.2]{Goldman} implies that the restriction $\pi_*^{-1}(C)\to C$ to any connected component $C$ of $ \Hom(F_n/\Gamma^{q}_n,PU(2))$, is a $2^n$-fold covering map (the fiber is in bijection with $\Hom(F_n/\Gamma^{q+1}_n,\ker \pi)=\{\pm I\}^n$). From Theorem \ref{Theorem1} we know the connected components of $\Hom(F_n/\Gamma^{q+1}_n,SU(2))$: one is $\Hom(\Z^n,SU(2))$ and the others, which we call $K_i$, are all homeomorphic to $PU(2)$.  We will describe the correspondence between components $K_i$ and the connected components $\pi_*(K_i)$ in $\Hom(F_n/\Gamma^{q}_n,PU(2))$. Each of the components $K_i$ consists of representations (by elements of $PU(2)$) conjugate to a fixed surjective homomorphism $\rho\colon F_n/\Gamma^{q+1}_n\to Q_{2^{r+1}}$ with $2\leq r\leq q$. The image under $\pi_*$ of these components are the conjugated homomorphisms (also by elements of $PU(2)$) of the surjective homomorphism $\pi_*(\rho)\colon F_n/\Gamma^{q}_n\to D_{2^{r}}$.  Since $\pi_*$ is open, $\pi_*|\colon K_i\to \pi_*(K_i)$ is a covering map. Fix a homomorphism $\sigma\colon F_n/\Gamma^{q}_n\to D_{2^{r}}$ in $\pi_*(K_i)$. Then the lifts of $\sigma$ in $K_i$ are among the homomorphisms $g\rho g^{-1}\colon F_n/\Gamma^{q+1}_n\to gQ_{2^{r+1}}g^{-1}$ for $g \in PU(2)$. To have $\pi \circ g\rho g^{-1} = \sigma$, $g$ must be in $Z(D_{2^{r}})$.  We have two different cases. First, if $r\geq3$, then $Z(D_{2^{r}})=\langle \pi(\xi_4)\rangle$ and $\pi_*(K_i)\cong PU(2)/\pi(\mu_4)$, where $\mu_4$ is the cyclic group generated by $\xi_4$ as before. Then the restriction $\pi_*|_{K_i}\colon K_i\to \pi_*(K_i)$ is a $2$-fold covering map. When $r=2$, $Z(D_4)=D_4$, and $\pi|_{K_i}\colon K_i\to \pi_*(K_i)\cong PU(2)/\pi(Q_8)$ is a 4-fold covering map. 

With these covering spaces, we can easily count the connected components of $\Hom(F_n/\Gamma^{q}_n,SO(3))$. Indeed, let $\Hom(\Z^n,SO(3))_{\mathds{1}}$ denote the connected component containing $(I,...,I)$. The connected component corresponding to $\Hom(\Z^n,SU(2))$ is mapped under $\pi_*$ to $\Hom(\Z^n,SO(3))_{\mathds{1}}$. For components of commuting tuples, other than the component $\Hom(\Z^n,SO(3))_{\mathds{1}}$, the correspondence is $\frac{2^{n}}{4}$ to 1, and for components consisting of $n$-tuples that generate a subgroup of nilpotency class at most $r$, with $r\geq2$, is $\frac{2^{n}}{2}$ to 1. Thus, we only need to divide the numbers $C(n,2)$ and $C(n,r)-C(n,2)$ of Corollary \ref{Theorem1} by $2^{n-2}$ and $2^{n-1}$ respectively.

\begin{corl}\label{corl1}
Let $q\geq 2$, $n\geq 2$. Then
\[\Hom(F_n/\Gamma^{q}_n,SO(3))\cong\Hom(\Z^n,SO(3))_{\mathds{1}}\sqcup \bigsqcup_{M(n)} SU(2)/Q_8\sqcup\bigsqcup_{M(n,q)} SU(2)/\mu_{4}, \]
where $M(n)=\frac{(2^n-1)(2^{n-1}-1)}{3}$ and $M(n,q)=(2^n-1)(2^{(q-2)(n-1)}-1)2^{n-2}$.
\end{corl}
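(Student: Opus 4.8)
The plan is to read off the corollary from the double cover $\pi\colon SU(2)\to SO(3)=PU(2)$ together with Theorem \ref{Theorem1}, which already catalogues the source space completely. The first step is to upgrade $\pi$ to a surjection of homomorphism spaces $\pi_*\colon\Hom(F_n/\Gamma^{q+1}_n,SU(2))\to\Hom(F_n/\Gamma^{q}_n,SO(3))$. The key point is that $\pi$ lowers nilpotency class by exactly one on non-abelian subgroups: if $H\subset SU(2)$ has class $q$ then $\Gamma^q(H)=\{\pm I\}$, so $\pi(H)$ has class $q-1$; conversely, because $\ker\pi=\{\pm I\}$ is central, every class-$(q-1)$ subgroup of $SO(3)$ lifts back to a class-$q$ subgroup of $SU(2)$. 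This makes $\pi_*$ surjective for all $q\ge 2$.

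The second step is to turn $\pi_*$ into a collection of covering maps. Goldman's lemma \cite{Goldman} shows the restriction of $\pi_*$ over any connected component of the target is a $2^n$-fold cover, with fiber a torsor over $\Hom(\Z^n,\{\pm I\})=\{\pm I\}^n$. By Theorem \ref{Theorem1} the source decomposes as $\Hom(\Z^n,SU(2))$ together with $C(n,q+1)$ components each homeomorphic to $PU(2)$, where each such $K_i$ is the $PU(2)$-conjugation orbit of a fixed surjection $\rho\colon F_n/\Gamma^{q+1}_n\twoheadrightarrow Q_{2^{r+1}}$ with $2\le r\le q$. The whole problem thus reduces to determining, for each $K_i$, the homeomorphism type of its image and the degree of $K_i\to\pi_*(K_i)$, after which a bookkeeping computation finishes the argument.

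I expect the stabilizer computation to be the main obstacle. Each image $\pi_*(K_i)$ is the conjugation orbit of $\pi_*(\rho)\colon F_n/\Gamma^{q}_n\twoheadrightarrow D_{2^r}$, so $\pi_*(K_i)\cong PU(2)/C$ where $C$ is the centralizer in $PU(2)$ of the dihedral image $D_{2^r}$; the covering degree of $K_i\to\pi_*(K_i)$ is then $|C|$. The crux is to show $C=Z(D_{2^r})$ and to evaluate it, and here the two regimes genuinely differ. For $r\ge 3$, $D_{2^r}$ is a dihedral group of order $\ge 8$ whose principal rotation has angle $\ne\pi$, forcing any centralizing rotation into $\langle\pi(\xi_4)\rangle\cong\Z/2$ and excluding the flips; this gives a $2$-fold cover with $\pi_*(K_i)\cong PU(2)/\pi(\mu_4)\cong SU(2)/\mu_4$ by the quotient-of-quotients homeomorphism. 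For $r=2$, however, $D_4$ is the Klein four-group of coordinate half-turns, which is abelian with $Z(D_4)=D_4$; a direct check shows an element of $SO(3)$ centralizing all three coordinate half-turns must preserve each coordinate axis and hence lie in $D_4$, giving a $4$-fold cover with $\pi_*(K_i)\cong PU(2)/\pi(Q_8)\cong SU(2)/Q_8$. Distinguishing these two centralizer calculations is what controls everything downstream.

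The final step is arithmetic. The component $\Hom(\Z^n,SU(2))$ maps onto the identity component $\Hom(\Z^n,SO(3))_{\mathds{1}}$. Since every fiber over a target component has $2^n$ points, the $r=2$ (i.e.\ $Q_8$) source components fold down $2^n/4=2^{n-2}$ to one: dividing their count $C(n,2)=\tfrac{2^{n-2}(2^n-1)(2^{n-1}-1)}{3}$ by $2^{n-2}$ yields $M(n)$ copies of $SU(2)/Q_8$. The remaining $r\ge 3$ source components, numbering $C(n,q+1)-C(n,2)=(2^n-1)(2^{(q-2)(n-1)}-1)2^{2n-3}$, fold down $2^n/2=2^{n-1}$ to one, and dividing by $2^{n-1}$ yields $M(n,q)$ copies of $SU(2)/\mu_4$. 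Assembling the three kinds of components gives the stated homeomorphism.
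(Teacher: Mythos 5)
Your proposal is correct and follows essentially the same route as the paper: surjectivity of $\pi_*$ via the central kernel $\{\pm I\}$, Goldman's lemma to get a $2^n$-fold covering over each component, identification of the deck/stabilizer group with $Z(D_{2^r})$ (which is $\langle\pi(\xi_4)\rangle$ for $r\geq 3$ and all of $D_4$ for $r=2$), and the same division of $C(n,2)$ by $2^{n-2}$ and of $C(n,q+1)-C(n,2)$ by $2^{n-1}$. The only cosmetic difference is that you phrase the key step as computing the centralizer of $D_{2^r}$ in $PU(2)$, while the paper counts lifts of a fixed representation $\sigma$; these are the same calculation.
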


\begin{remark}
The case $q=2$ which corresponds to commuting elements was originally computed in \cite{Torres}, and the number $M(n)$ agrees with their calculation.
\end{remark}

Now we discuss the situation for $U(2)$. Any matrix in $X\in U(2)$ can be written as $\sqrt{\det(X)}X^\prime$, where $X^\prime\in SU(2)$. In this decomposition, $[X,Y]=[X^\prime,Y^\prime]$ for any $X,Y$ in $U(2)$. Consider the map
\[(S^1)^n\times \Hom(F_n/\Gamma^{q+1}_n,SU(2))\to\Hom(F_n/\Gamma^{q+1}_n,U(2))\] given by $(\lambda_1,\ldots,\lambda_n,x_1,\ldots,x_n)\mapsto (\lambda_1 x_1,\ldots,\lambda_n x_n)$. By the previous observation this a surjective map. This representation for an $n$-tuple is uniquely determined up to signs, that is, we have a homeomorphism
\[(S^1)^n\times_{(\Z/2)^n} \Hom(F_n/\Gamma^{q+1}_n,SU(2))\cong\Hom(F_n/\Gamma^{q+1}_n,U(2)).\]
Using Theorem \ref{Theorem1} we see that the connected components of this space are homeomorphic to the space $(S^1)^n\times_{(\Z/2)^n}\Hom(\Z^n,SU(2))\cong \Hom(\Z^n,U(2))$ or $(S^1)^n\times_{H}PU(2)$ where $H$ is a subgroup of $(\Z/2)^n$. The first component corresponds to the commuting $n$-tuples in $U(2)$ and the latter to the $n$-tuples that generate a non-abelian subgroup of nilpotency class at most $q$. We can see the $(\Z/2)^n$ action as an action on the indexing set of the connected components, which can be represented by elements of $\Hom(F_n/\Gamma^{q+1}_n,Q_{2^{q+1}})$. To count the number of connected components, let $\vec{\varepsilon}=(\varepsilon_1,\ldots,\varepsilon_n)$ with each $\varepsilon_i = \pm 1$ be an arbitrary element in $(\Z/2)^n$ and $\vec{x}=(x_1,\ldots,x_n)$ a non-commutative $n$-tuple in $\Hom(F_n/\Gamma^{q+1}_n,Q_{2^{q+1}})$. Then the stabilizer of this element consists of 
\[\text{Stab}(\vec{x})=\{\vec{\varepsilon}\mid(\varepsilon_1x_1,\ldots,\varepsilon_nx_n)=(gx_1g^{-1},\ldots,gx_ng^{-1})\text{ for some } g\in SU(2)\}.\] 
That is, such $g$ either commutes or anti-commutes with all $x_i$. We have several cases. Let $\vec{\varepsilon}\in \text{Stab}(\vec{x})$ be a non trivial element. \\

\noindent {\bf Case 1:} Suppose some $x_i$ lies in $T-\{\pm I\}$. 

$\bullet$ If $\varepsilon_i=1$, $x_i=gx_ig^{-1}$ and thus $g$ must also lie in $T$. To generate a non-abelian  nilpotent subgroup of $Q_{2^{q+1}}$ with $x_i$, we need at least one more element of the form $\xi_{2^q}^kw$ for some $k$. This element does not commute with any element of $T-\{\pm I\}$ and only anti-commutes with $\pm \xi_4$. Thus, the only choices for $g$ are $\pm I$ and $\pm\xi_4$.

$\bullet$ If $\varepsilon_i=-1$, $-x_i=gx_ig^{-1}$ and by Lemma \ref{simult-diag}, $x_i=\pm \xi_4$ and $g\in Tw$. Again, in the $n$-tuple there must be an element of the form $x_j=\xi_{2^q}^kw$ for some $k$. The only elements in $Tw$ that commute or anti-commute with $x_j$ are $g=\pm\xi_{2^q}^kw$ or $g=\pm \xi_4\xi_{2^q}^kw$. Note that in this case, the remaining elements in the $n$-tuple can only be of the form $\pm I,\pm\xi_4,\pm\xi_{2^q}^kw$ or $\pm \xi_4\xi_{2^q}^kw$, which generates a copy of $Q_8$. \\

\noindent {\bf Case 2:} Suppose all $x_i$ lie in $Tw \cup \{\pm I\}$.

Not all $x_i$ can be $\pm I$, since $\vec{x}$ is a non-commutative tuple, so some $x_i$ is of the form $x_i=\xi_{2^q}^kw$.

$\bullet$ Suppose $\vec{x}$ generates a nilpotent group of nilpotency class $r\geq 3$. Then there is at least one $x_j=\xi_{2^q}^lw$ that is different from $\pm x_i$ or $\pm\xi_4x_i$. Thus the only choices for $g$ are $\pm I$ and $\pm \xi_4$.

$\bullet$ Suppose $\vec{x}$ generates a nilpotent group of nilpotency class 2. Then the only other choices for the remaining $x_j$'s are $\pm\xi_{2^q}^kw$ or $\pm \xi_4\xi_{2^q}^kw$. Hence $g$ can only be $\pm I,\pm\xi_4,\pm\xi_{2^q}^kw$ or $\pm \xi_4\xi_{2^q}^kw$.\\

We can conclude that if $\vec{x}$ generates a copy of $Q_8$, then $|\text{Stab}(\vec{x})|=4$ and $|\text{Stab}(\vec{x})|= 2$ in any other case.

\begin{corl}\label{corl2}
Let $q\geq 2$ and $n\geq 1$, then
\[\Hom(F_n/\Gamma^{q+1}_n,U(2))\cong \Hom(\Z^n,U(2))\sqcup \bigsqcup_{{M}(n)}(S^1)^n\times_{(\Z/2)^2} PU(2)\sqcup \bigsqcup_{{M}(n,q)}(S^1)^n\times_{(\Z/2)} PU(2),\]
where $M(n)$ and $M(n,q)$ are as in Corollary \ref{corl1}.
\end{corl}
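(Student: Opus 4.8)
The plan is to assemble the statement from three ingredients already in place: the homeomorphism $(S^1)^n\times_{(\Z/2)^n}\Hom(F_n/\Gamma^{q+1}_n,SU(2))\cong\Hom(F_n/\Gamma^{q+1}_n,U(2))$, the decomposition of $\Hom(F_n/\Gamma^{q+1}_n,SU(2))$ from Theorem \ref{Theorem1} into the commuting component $\Hom(\Z^n,SU(2))$ together with $C(n,q+1)$ components each homeomorphic to $PU(2)$, and the stabilizer computation carried out just above. First I would split off the commuting part: the diagonal $(\Z/2)^n$-action carries $\Hom(\Z^n,SU(2))$ to itself, and $(S^1)^n\times_{(\Z/2)^n}\Hom(\Z^n,SU(2))\cong\Hom(\Z^n,U(2))$ supplies the first summand. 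It then remains to analyze the non-commuting part $\bigsqcup_j K_j$ with each $K_j\cong PU(2)$.

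Next I would organize these components into $(\Z/2)^n$-orbits. Since $\pm I$ is central in each $Q_{2^{r+1}}$, twisting a tuple $\vec x$ by signs $\vec\varepsilon$ leaves every square $x_i^2$ and every commutator $[x_i,x_j]$ unchanged, so the action preserves the nilpotency class of the generated subgroup; hence each orbit lies entirely in one of the two families (generating $Q_8$, or generating some $Q_{2^{r+1}}$ with $r\ge 3$). For a single orbit with representative $K=K_{\vec x}$ and stabilizer $H=\text{Stab}(\vec x)\le(\Z/2)^n$, the union of the components in the orbit is $(\Z/2)^n\times_H K$ as a $(\Z/2)^n$-space, and the standard identity $(S^1)^n\times_{(\Z/2)^n}\big((\Z/2)^n\times_H K\big)\cong (S^1)^n\times_H K$ reduces that orbit's contribution to $(S^1)^n\times_H K$.

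The crux is identifying the residual $H$-action on $K\cong PU(2)$. Writing $K$ via the orbit map $g\mapsto g\vec x g^{-1}$, which is free because a non-abelian tuple has centralizer $\{\pm I\}$ in $SU(2)$, an element $\vec\varepsilon\in H$ satisfies $\vec\varepsilon\,\vec x=h_{\vec\varepsilon}\vec x h_{\vec\varepsilon}^{-1}$ for some $h_{\vec\varepsilon}\in SU(2)$ whose class $\pi(h_{\vec\varepsilon})\in PU(2)$ is uniquely determined, and it acts on $K$ by right translation by $\pi(h_{\vec\varepsilon})$. Because the $x_i$ are invertible, $\vec\varepsilon\,\vec x=\vec x$ forces $\vec\varepsilon$ to be trivial, so $\vec\varepsilon\mapsto\pi(h_{\vec\varepsilon})$ is an injective homomorphism $H\hookrightarrow PU(2)$ and the induced $H$-action on $PU(2)$ is free. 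Thus each orbit contributes $(S^1)^n\times_H PU(2)$ with $H$ acting by signs on the torus and by translation on $PU(2)$, exactly as in the notation of the statement.

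Finally I would insert the stabilizer sizes and count orbits. The computation above gives $|H|=4$, hence $H\cong(\Z/2)^2$, precisely when $\vec x$ generates $Q_8$, and $|H|=2$, hence $H\cong\Z/2$, in every other case. The $r=2$ term of $C(n,q+1)$ equals $2^{n-2}M(n)$ and the remaining terms sum to $2^{n-1}M(n,q)$; dividing by the orbit sizes $2^n/|H|$, namely $2^{n-2}$ and $2^{n-1}$, yields $M(n)$ orbits of $Q_8$-type and $M(n,q)$ orbits of the higher type, which is the claimed disjoint union. I expect the main obstacle to be the third step: correctly pinning down the residual action on the $PU(2)$-factor as a \emph{free} translation, and checking that the homeomorphism type $(S^1)^n\times_H PU(2)$ does not depend on the chosen orbit representative, so that the two families genuinely contribute uniform summands; the orbit counting against Theorem \ref{Theorem1} is then routine bookkeeping.
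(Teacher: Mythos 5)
Your proposal is correct and follows essentially the same route as the paper: the paper's argument is precisely the homeomorphism $(S^1)^n\times_{(\Z/2)^n}\Hom(F_n/\Gamma^{q+1}_n,SU(2))\cong\Hom(F_n/\Gamma^{q+1}_n,U(2))$, the stabilizer computation you cite (order $4$ for $Q_8$-generating tuples, order $2$ otherwise), and the division of the two parts of $C(n,q+1)$ by the orbit sizes $2^{n-2}$ and $2^{n-1}$. Your explicit identification of the residual $H$-action on the $PU(2)$ factor as a free translation, and the reduction $(S^1)^n\times_{(\Z/2)^n}\bigl((\Z/2)^n\times_H K\bigr)\cong(S^1)^n\times_H K$, merely make precise steps the paper leaves implicit.
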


\section{Nil-2 tuples in $U(m)$}\label{sec:um}

What about $m>2$, what are the connected components of $\Hom(F_n/\Gamma^{q+1}_n,U(m))$ then? We can not give an answer to this in its full generality, but we can at least say something about the case $q=2$.

Let ${\bf a}$ be a partition of $\{1,2,\ldots,m\}$ into disjoint non-empty subsets. Define $U({\bf a})$ as the subgroup of $U(m)$ consisting of $m\times m$ ``block diagonal matrices with blocks indexed by ${\bf a}$'', by which we mean matrices $A \in U(m)$ whose $(i,j)$-th entry is $0$ whenever $i$ and $j$ are in different parts of the partition ${\bf a}$. To explain our terminology, notice that when each part of ${\bf a}$ consists of consecutive numbers, say, if the parts are $\{1,\ldots,m_1\}$, $\{m_1+1, \ldots, m_1+m_2\}$, $\{m_1+m_2+1, \ldots, m_1+m_2+m_3\}$, $\ldots$, then $A$ is what is traditionally called a block diagonal matrix:
\[A = \left(\begin{matrix}
      A_1&&0\\
      &\ddots&\\
      0&&A_k
    \end{matrix}\right); \qquad A_i \in U(m_i)\]

The conjugacy class of the subgroup $U({\bf a})$ depends only on the
sizes of the parts of ${\bf a}$. To be specific, if $\pi$ is any
permutation of $\{1, \ldots, m\}$ such that the image of each part of
${\bf a}$ consists of consecutive numbers, then $U({\bf a})$ is
conjugate, via the permutation matrix associated to $\pi$, to the
subgroup of traditional block diagonal matrices as above (where the
$m_i$ are the sizes of the parts of ${\bf a}$). In particular, the
subgroup $U({\bf a})$ is always isomorphic to $\prod_{i=1}^kU(m_i)$
where the $m_i$ are the sizes of the parts of ${\bf a}$ but the isomorphism is far from unique.

Let $Z_{\bf a}$ denote the center of $U({\bf a})$ which consists of ``block scalar matrices'': diagonal matrices $\mathrm{diag}(\lambda_1, \ldots, \lambda_m) \in U(m)$ such that $\lambda_i = \lambda_j$ whenever $i$ and $j$ are in the same part of ${\bf a}$. For example, if the parts consist of consecutive numbers, the elements of $Z_{\bf a}$ are of the form:
\[\left(\begin{matrix}
\lambda_1 I_{m_1}&&0\\
&\ddots&\\
0&& \lambda_k I_{m_k}
\end{matrix}\right).\]

Given any diagonal matrix $D = \mathrm{diag}(\lambda_1, \ldots, \lambda_m) \in U(m)$ there is a coarsest partition ${\bf a}(D)$ such that $D \in Z_{{\bf a}(D)}$, namely, the partition where $i$ and $j$ are in the same part \emph{if and only if} $\lambda_i = \lambda_j$. One can easily check that the centralizer of $D$ is precisely $U({\bf a}(D))$.

Our goal is to interpret 2-nilpotent tuples of $U(m)$ as \emph{almost commuting} elements of the the subgroups $U({\bf a})$. What we mean by ``almost commuting'' is as follows: for any topological group $G$ and any closed subgroup $K\subset G$ contained in the center of $G$, the space of $K$-almost commuting $n$-tuples is defined to be
\[ B_n(G,K) := \{ (x_1,\ldots,x_n) \in G^n \mid [x_i, x_j] \in K \quad (1 \le i, j \le n)\}.\]
Thus, we have an inclusion $B_n(G,Z(G))\subset \Hom(F_n/\Gamma^3_n,G)$.

In particular, for $U(m)$,
\[\bigcup_{{\bf a}\vdash m}B_n(U({\bf a}),Z_{\bf a})\subset \Hom(F_n/\Gamma^3_n,U(m)),\]
where we've borrowed the notation ${\bf a} \vdash m$ typically used for partitions of the \emph{number} $m$ to indicate that the union is over all partitions of the \emph{set} $\{1, \ldots, m\}$ where each parts consists of consecutive numbers and the parts are ordered by size.

Let $\Rep(\Gamma,G)$ denote the orbit space of the action of $G$ on $\Hom(\Gamma,G)$ by conjugation.

\begin{prop}\label{prop3}
  The above inclusion is surjective upon passing to orbits, that is, if \[\pi : \Hom(F_n/\Gamma^3_n,U(m)) \to \Rep(F_n/\Gamma^3_n,U(m))\] denotes the quotient map, then:
\[\pi\left(\bigcup_{{\bf a}\vdash m}B_n(U({\bf a}),Z_{\bf a})\right) = \Rep(F_n/\Gamma^3_n,U(m)).\]
\end{prop}

\begin{proof}
  Let $(x_1,\ldots,x_n)$ be an element of $\Hom(F_n/\Gamma^3_n,U(m))$. Then every commutator $[x_i,x_j]$ is central in the group generated by $\{x_1,\ldots,x_n\}$. In particular, all commutators commute with each other. Since each $x_i$ is in $U(m)$ and hence diagonalizable, we can simultaneously diagonalize all commutators by an element $g\in U(m)$. Let $y_i:=gx_ig^{-1}$ and $y_{ij}:=g[x_i,x_j]g^{-1}$. Now, each $y_{ij}$ lies in the center $Z_{{\bf a}_{ij}}$ for some coarsest partition ${\bf a}_{ij}$. Choose ${\bf a}$ as the infimum of all the ${\bf a}_{ij}$, that is, as the coarsest partition refining all ${\bf a}_{ij}$. We have by construction $y_{ij} \in Z_{{\bf a}_{ij}} \subseteq Z_{{\bf a}}$; and for each $k$, we have that $y_k$ is in the centralizer of each $y_{ij}$, so $y_k \in \bigcap_{ij} U({\bf a}_{ij}) = U({\bf a})$.

  The last remaining detail is that this partition ${\bf a}$ may not have parts that consist of consecutive numbers, or those parts may not be ordered by size, but, as explained above, a further conjugation fixes that.
\end{proof}

\begin{remark}
  The same argument works for $SU(m)$ and its subgroups $SU({\bf a})$. There are only two minor differences: the first is that $SU({\bf a})$ is identified with the subgroup of matrices in $\prod_{i=1}^k U(m_i)$ of determinant 1; the second is that for the last bit of the proof, the ``consecutivization'', one needs to observe that for every partition one can always find an \emph{even} permutation such that the image of each part consists of consecutive numbers and permutation matrices for even permutations lie in $SU(m)$.
\end{remark}

\begin{exam} As an application of Proposition \ref{prop3}, we calculate the number of connected components of $\Rep(F_n/\Gamma^3_n,U(3))$. The union is indexed by the partitions $3$, $1+2$ and $1+1+1$, where we are using partitions of the number $3$ as shorthand for partitions of the set $\{1,2,3\}$ whose parts are of the given sizes and consist of consecutive numbers. So $\Rep(F_n/\Gamma^3_n, U(3))$ is the union of the images under $\pi$ of $B_n(U(3),S^1)$, $B_n(U(1) \times U(2),\{1\} \times S^1) \cong U(1)^n \times B_n(U(2), S^1)$, and $(U(1)^3)^n$. The intersection of those first two $B_n$'s is clearly the last one, but not only that: we claim the same is true for  their images under $\pi$. This is clear once we characterize the $n$-tuples $x = (x_1, \ldots, x_n) \in \Hom(F_n/\Gamma^3_n,U(3))$ such that $\pi(x) \in \pi(B_n(U(\mathbf{a}), Z_{\mathbf{a}}))$ by means of a conjugation-invariant property shared by the commutators $[x_i, x_j]$ ($ 1\le i,j \le n$), in each of those three cases:

  \begin{itemize}
  \item For $\mathbf{a} = 3$, all commutators are scalar matrices.
  \item For $\mathbf{a} = 1+2$, all commutators share an eigenvector with eigenvalue 1.
  \item For $\mathbf{a} = 1+1+1$, all commutators are the identity.
  \end{itemize}
  
  We conclude that:
  \[\Rep(F_n/\Gamma^3_n,U(3))=B_n(U(3),S^1)/U(3)\cup_{(U(1)^3)^n/U(3)}(U(1)^n\times B_n(U(2),S^1))/U(3).\]
  With this one can count the number of connected components. First, recall that the quotient map $U(3)\to U(3)/S^1=PU(3)$ induces a principal bundle  $(S^1)^n\to B_n(U(3),S^1)\to \Hom(\Z^n,PU(3))$; so $B_n(U(3),S^1)$ has the same number of connected components as $ \Hom(\Z^n,PU(3))$, which was calculated in \cite[Theorem 1]{AdCoGo}. Since $B_n(U(2),S^1)=\Hom(F_n/\Gamma^3_n,U(2))$, and passing to orbits induced by conjugation does not change the number of components (because $U(3)$ is path connected), then the total number of connected components is
\[\frac{(3^n-1)(3^{n-1}-1)}{8}+\frac{(2^{n}-1)(2^{n-1}-1)}{3}+1.\]
\end{exam}

\begin{remark}
The spaces $\Hom(\Gamma,U(m))$ where $\Gamma$ is a central extension of the form $1\to \Z^r\to \Gamma\to \Z^n\to 1$ were studied in \cite{AdemCheng}.  The case where $r=1$, has a very detailed description, and they give a formula for the number of connected components for the example case $\Hom(H_\Z,U(m))$, where $H_\Z$ is the Heisenberg group (over the integers). Taking $n=2$ in our previous example, says that the space of homomorphisms from the Heisenberg group $H_\Z \cong F_2/\Gamma^3_2$ to $U(3)$ has 4 connected components, which agrees with their formula when $m=3$.
\end{remark}

\section{Cohomology of $B(r,Q_{2^n})$ for $r=2$ and $3$}

Now we turn our attention to the classifying spaces of principal $G$-bundles of transitional nilpotency class less than $q$, for a topological group $G$. As described in \cite{Ad5}, the spaces $\Hom(F_n/\Gamma^q_n,G)\subset G^n$ give rise to a simplicial subspace of the nerve of $G$. The geometric realizations $B(q,G):=|\Hom(F_*/\Gamma^q_*,G)|$ fit into a natural filtration of $BG$
\[\Bcom G:=B(2,G)\subset B(3,G)\subset\cdots\subset B(q,G)\subset\cdots\subset BG.\] So far we have completely described the subgroups of $Q_{2^{q+1}}$. This will allow us to compute the homotopy type of $B(r,Q_{2^{q+1}})$
as follows. Let $G$ be a finite group. Consider the poset $\mathcal{N}_r(G)$ of all subgroups of $G$ of nilpotency class less than $r$, ordered by inclusion, and the subposet $\mathcal{P}_r(G) = \{M_\alpha\} \cup \{M_{\alpha}\cap M_\beta\}$ where the $M_\alpha$ are the maximal subgroups in $\mathcal{N}_r(G)$. It was proved in \cite[Theorem 4.6]{Ad5} that when $\mathcal{P}_r(G)$ is a tree, there is a homotopy equivalence
\[B(r,G)\simeq B\left(\colim_{A\in \mathcal{N}_r(G)}A\right).\]

Let $q\geq2$. By Lemma \ref{Lemma7} we conclude that for $2\leq r\leq q$ the maximal subgroups of $Q_{2^{q+1}}$ of nilpotency class less than $r$ are $\mu_{2^{q}}$ and the $2^{q+1-r}$ subgroups isomorphic to $Q_{2^{r}}$; any two of them have the same intersection, namely, $\mu_{2^{r-1}}$. Therefore we have:
\[B(r,Q_{2^{q+1}})\simeq B\left(\bigast_{\mu_{2^{r-1}}}^{2^{q+1-r}}Q_{2^{r}} *_{\mu_{2^{r-1}}} \mu_{2^{q}}\right)\]
where $*$ denotes the amalgamated product of groups.


\paragraph{Cohomology of $\Bcom Q_{2^n}$.} Taking $r=2$ we see that $\Bcom Q_{2^n}\simeq B(\Z/2^{n-1}*_{\Z/2}(\bigast_{\Z/2}^{2^{n-2}}\Z/4))$ for any $n\geq 3$. Applying the associated Mayer-Vietoris sequence inductively we obtain 
\[H^i(\Bcom Q_{2^n};\Z)\cong\left\{\begin{matrix}
\Z &i=0\\
\Z/2^{n-1}\oplus(\Z/2)^{2^{n-2}}&i \text{  even}\\
0&\text{otherwise}.
\end{matrix}\right.\]

\begin{remark}
  The integral homology groups $H_i(\Bcom Q_8;\Z)$ were originally computed in \cite[Example~8.12]{Ad5}, where the authors were studying the homology of the spaces $\Bcom A$, where $A$ is a \emph{transitively commutative group} (that is, a group where all centralizers of non-identity elements are abelian; also know as a CA-group).
\end{remark}

\begin{prop}\label{cr2gq}\leavevmode
\begin{enumerate}
\item There is an isomorphism of graded rings $H^*(\Bcom Q_8;\F_2)\cong \F_2[y_1,y_2,y_3,z]/(y_iy_j,y_1^2+y_2^2+y_3^2,i\ne j)$ where $y_i$ has degree 1 and $z$ degree 2. 

\item Let $n\geq 4$. Then the $\F_2$-cohomology ring of $\Bcom Q_{2^{n}}$ has a presentation given by
\[H^*(\Bcom Q_{2^{n}};\F_2)\cong \F_2[x_1,x_2,y_1,\ldots,y_{2^{n-2}},z]/(x_1^2,x_ky_i,y_iy_j,i\ne j ,x_2+\sum_{i=1}^{2^{n-2}}y_i^2),\]
where $x_1,y_j$ have degree 1 and $x_2,z$ have degree 2.
\end{enumerate}
\end{prop}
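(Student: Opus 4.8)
The plan is to compute these cohomology rings via the amalgamated-product decomposition established just above, namely $\Bcom Q_{2^n}\simeq B\bigl(\Z/2^{n-1}*_{\Z/2}(\bigast_{\Z/2}^{2^{n-2}}\Z/4)\bigr)$, and the associated homotopy pushout of classifying spaces. Since an amalgamated product $A*_C B$ has classifying space given by the double mapping cylinder $BA \cup_{BC} BB$, the iterated amalgamation yields an iterated homotopy pushout, and the tool of choice is the Mayer--Vietoris sequence in $\F_2$-cohomology. Because the integral computation above already pins down the additive structure (all torsion is $2$-primary and concentrated in even degrees, plus the degree-$0$ part), the universal coefficient theorem gives the $\F_2$-Betti numbers degree by degree, so the only real content is the \emph{ring} structure: identifying generators, their degrees, and the relations among them.

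First I would fix the building blocks. For the $\Z/2$ factor we have $H^*(B\Z/2;\F_2)=\F_2[t]$ with $|t|=1$; for each $\Z/4$ factor, $H^*(B\Z/4;\F_2)=\F_2[u_i,v_i]/(u_i^2)$ with $|u_i|=1$, $|v_i|=2$; and for the cyclic $\Z/2^{n-1}$ factor (present only when $n\ge 3$), $H^*(B\Z/2^{n-1};\F_2)=\F_2[x_1,x_2]/(x_1^2)$ with $|x_1|=1,|x_2|=2$ when $n-1\ge 2$, i.e. $n\ge 3$. The amalgamation is all along copies of $\Z/2$, so the restriction maps $H^*(B\Z/4;\F_2)\to H^*(B\Z/2;\F_2)$ and $H^*(B\Z/2^{n-1};\F_2)\to H^*(B\Z/2;\F_2)$ control the gluing. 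The key local fact is that the degree-$1$ generator of each $\Z/4$ (respectively $\Z/2^{n-1}$) restricts to the generator $t$ of the amalgamating $\Z/2$, since each inclusion $\Z/2\hookrightarrow \Z/4$ is the inclusion of the unique order-$2$ subgroup and induces an isomorphism on $H^1(-;\F_2)$; meanwhile the degree-$2$ generators $v_i$ (and $x_2$) restrict to $t^2$. I would assemble all the pieces through the Mayer--Vietoris sequence, tracking how the degree-$1$ classes $y_i$ (coming from the $\Z/4$ factors, or more precisely from the differences forced by the amalgamation) survive, and introduce a single degree-$2$ class $z$ that generates the free polynomial part detected integrally.

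The heart of the argument is then reading off the relations. I expect the products $y_iy_j$ for $i\ne j$ to vanish because classes supported on different wedge-like summands of the pushout have disjoint support — a standard consequence of the Mayer--Vietoris/wedge structure, where cup products of classes pulled back from different legs of the amalgam die. The relation $y_1^2+y_2^2+y_3^2=0$ in the $Q_8$ case (and its analog $x_2=\sum y_i^2$ for $n\ge 4$) should come from the single amalgamating constraint: all the degree-$1$ classes restrict to the same $t\in H^1(B\Z/2)$, so their squares all restrict to $t^2$, and the Mayer--Vietoris boundary forces exactly one linear dependence among the $y_i^2$ together with $x_2$. The appearance of the \emph{separate} class $x_1$ of square zero in part (2) but not in part (1) reflects the presence of the $\Z/2^{n-1}$ factor with $n-1\ge 3$, whose $H^1$ class $x_1$ satisfies $x_1^2=0$ (as $\Z/2^{n-1}$ for $n\ge 4$ has $H^*(B\Z/2^{n-1};\F_2)$ with a degree-$1$ class squaring to zero), whereas for $Q_8$ the corresponding factor is $\Z/4$ and its degree-$1$ class is absorbed into the symmetric $y_i$.

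The main obstacle I anticipate is bookkeeping the iterated amalgamation cleanly: the $2^{n-2}$-fold amalgamated product $\bigast_{\Z/2}^{2^{n-2}}\Z/4$ must be handled either by an induction on the number of $\Z/4$ factors or by recognizing the whole colimit at once as a graph-of-groups over a tree and invoking a single Mayer--Vietoris spectral sequence; keeping the generators $y_1,\dots,y_{2^{n-2}}$ and their interactions with $x_1,x_2$ straight through the induction, and in particular verifying that the \emph{only} relation among squares is the single linear one $x_2+\sum y_i^2=0$ rather than several, is where care is needed. I would close by confirming that the proposed presentation has the correct $\F_2$-dimension in each degree (matching the integral computation via universal coefficients), which certifies that I have found \emph{all} the relations and no generator or relation is missing.
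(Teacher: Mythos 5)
Your route (Mayer--Vietoris over the tree of groups) is not the paper's, which instead runs the Lyndon--Hochschild--Serre spectral sequence for the central extension $\Z/2\to\Gamma\to\Z/2^{n-2}*\bigast^{2^{n-2}}\Z/2$ and reads off the relation $x_2+\sum_i y_i^2$ as the $k$-invariant, i.e.\ as $d_2$ of the fibre class, with $d_3=\Sq^1(x_2)=0$ closing the argument. More importantly, your ``key local fact'' is false: the restriction $H^1(B\Z/4;\F_2)\to H^1(B\Z/2;\F_2)$ induced by the inclusion of the order-two subgroup is the \emph{zero} map, not an isomorphism, because the nontrivial homomorphism $\Z/4\to\Z/2$ kills the square of the generator (likewise for $\Z/2\subset\Z/2^{n-1}$). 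Had the degree-one generators really restricted to $t$, the Mayer--Vietoris sequence for the three copies of $B\Z/4$ glued along the common $B\Z/2$ would force $H^1(\Bcom Q_8;\F_2)$ to have rank $1$ instead of $3$, contradicting the additive answer you propose to check against; so the proposal is internally inconsistent, and the mechanism you offer for $y_1^2+y_2^2+y_3^2=0$ (``their squares all restrict to $t^2$'') evaporates.

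The deeper problem is that this quadratic relation is invisible to restrictions altogether: each $y_i$ restricts to the exterior generator $u_i$ of $H^*(B\Z/4;\F_2)=\F_2[u_i,v_i]/(u_i^2)$ on its own leg and to $0$ on the amalgamating $B\Z/2$ and on the other legs, so $y_i^2$ restricts to $0$ on \emph{every} piece of the pushout and therefore lies in the image of the connecting homomorphism. Deciding which (nonzero!) element of that image $y_i^2$ is --- which is exactly the content of $x_2+\sum_i y_i^2=0$, and of $\sum_i y_i^2=0$ for $Q_8$ --- cannot be done by the support bookkeeping you describe; it requires the multiplicative structure of the connecting map, or a Bockstein/$\Sq^1$ computation, or (as in the paper) the $k$-invariant differential in the spectral sequence of the central extension. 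Your disjoint-support argument for $y_iy_j=0$, $i\ne j$, is fine, and the dimension count against the integral computation is a sensible consistency check, but as written the proof of the one genuinely nontrivial relation is missing.
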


\begin{proof}
We work out the case $n\geq 4$. Let $\Gamma=\Z/2^{n-1}*_{\Z/2}(\bigast_{\Z/2}^{2^{n-2}}\Z/4)$. We use the central extension 
\[\Z/2\vartriangleleft \Gamma\to \Z/2^{n-2}*\bigast^{2^{n-2}}\Z/2.\] 
Recall that for $j>1$, $H^*(B\Z/2^j;\F_2)\cong \F_2[x_1,x_2]/x_1^2$ where $\deg(x_1)=1$ and $\deg(x_2)=2$. Thus, $H^*(B(\Z/2^{n-2}*\bigast^{2^{n-2}}\Z/2);\F_2)\cong \F_2[x_1,x_2,y_1,\ldots,y_{2^{n-2}}]/(x_1^2,x_ky_i,y_iy_j,i\ne j )$. The $k$-invariant of the associated Serre spectral sequence of this extension is $x_2+\sum_{i=1}^{2^{n-2}}y_i^2$. Therefore the $E^{*,*}_3$ page is 
\[\F_2[z]\otimes\F_2[x_1,x_2,y_1,\ldots,y_{2^{n-2}}]/(x_1^2,x_ky_i,y_iy_j,i\ne j ,x_2+\sum_{i=1}^{2^{n-2}}y_i^2).\]
The Steenrod square $\Sq^1(x_2+\sum_{i=1}^{2^{n-2}}y_i^2)=\Sq^1(x_2)$ is $0$ since it can be expressed only in terms of $x_2$. That is, $d_3=0$ and thus the spectral sequence abuts to $E^{*,*}_3$.

So we have found the $E_\infty$-page along with its ring structure. Since all the relations involve only generators from the base of the fibration, these relations hold in the cohomology ring as well.
\end{proof}

\begin{remark}
  Recall that $H^*(BQ_8;\F_2)=\F_2[x,y,t]/(x^2+xy+y^2,x^2y+xy^2)$ where $x,y$ have degree 1 and $t$ has degree 4. Consider the inclusion $\iota\colon \Bcom Q_8\to BQ_8$ and $\iota^*\colon H^*(BQ_8;\F_2)\to H^*(\Bcom Q_8;\F_2)$. By Proposition \ref{cr2gq}, $\iota^*(x^2y)=0$ since $\iota^*(x)$ and $\iota^*(y)$ are linear combinations of the $y_j$ and all monomials of degree 3 in the $y_j$ are $0$. Thus $\iota^*$ is not injective (this was also pointed out in \cite[Proposition 6.8]{Okay}). It is unclear for which groups $G$ and with which coefficients $\iota^* : H^*(\Bcom G) \to H^*(BG)$ is injective. In \cite{lowdim} it is shown that $\iota^*$ is injective both with $\F_2$ and with integral coefficients for the groups $SU(2), U(2), O(2), SO(3)$.
\end{remark}

\paragraph{Cohomology of $B(3,Q_{16})$.} By the above discussion, $B(3, Q_{16})$ is the classifying space of the group $\Gamma := Q_8 \ast_{\mu_4} Q_8 \ast_{\mu_4} \mu_8$. To compute it's $\F_2$ cohomology, we'll use the Lyndon-Hochschild-Serre spectral sequence for the central extension $1 \to \mu_2 \to \Gamma \to \Gamma/\mu_2 \to 1$, so we first need the cohomology of the group $\Gamma/\mu_2 = (Q_8/\mu_2) \ast_{\mu_4/\mu_2} (Q_8/\mu_2) \ast_{\mu_4/\mu_2} (\mu_8/\mu_2) \cong (\Z/2 \times \Z/2) \ast_{\Z/2} (\Z/2 \times \Z/2) \ast_{\Z/2} \Z/4$. We are lucky that the first amalgamated product simplifies: $(\Z/2 \times \Z/2) \ast_{\Z/2} (\Z/2 \times \Z/2) \cong (\Z/2 \ast \Z/2) \times \Z/2$ ---this is easiest to see if we use the shear automorphism of the $\Z/2 \times \Z/2$'s to replace the diagonal embeddings of $\Z/2$ with the inclusions into, say, the first factor. Under this isomorphism, the inclusion $\Z/2 \to (\Z/2 \times \Z/2) \ast_{\Z/2} (\Z/2 \times \Z/2)$ corresponds to the inclusion $\Z/2 \to (\Z/2 \ast \Z/2) \times \Z/2$ into the second factor.

To compute the cohomology of $\Gamma/\mu_2$, we can now use the Mayer-Vietoris sequence for the homotopy pushout square
\[\xymatrix{B\Z/2 \ar[r] \ar[d] & B\left((\Z/2 \ast \Z/2) \times \Z/2\right) \ar[d] \\
    B\Z/4 \ar[r] & B(\Gamma/\mu_2).}\]
The top map is surjective on cohomology which implies all the connecting homomorphisms are zero; this in turn means that the map $H^*(B(\Gamma/\mu_2)) \to H^*(B\left((\Z/2 \ast \Z/2) \times \Z/2\right) \vee B\Z/4)$ is an injective ring homomorphism. The groups in the pushout (and thus their wedge) have the following cohomology rings, where $t_2$ has degree $2$ and all other variables have degree $1$:
\begin{align*}
  H^*(B\left((\Z/2 \ast \Z/2) \times \Z/2\right)) & = \F_2[y_1, y_2, t_1]/(y_1y_2) \\
  H^*(B\Z/4) & = \F_2[y_3, t_2]/(y_3^2) \\
  H^*(B\left((\Z/2 \ast \Z/2) \times \Z/2\right) \vee B\Z/4) & = \F_2[y_1, y_2, t_1,y_3,t_2]/(y_1y_2,y_3^2,uv),
\end{align*}
where $u$ ranges over $\{y_1,y_2,t_1\}$, and $v$ over $\{y_3,t_2\}$.

If $t$ denotes the cohomology generator of the $B\Z/2$ in the top left corner, then the top horizontal map on cohomology is given by $y_1, y_2 \mapsto 0$, $t_1 \mapsto t$, and the left vertical map by $y_3 \mapsto 0$, $t_2 \to t^2$. Therefore, in the Mayer-Vietoris sequence, the kernel of the homomorphism \[H^*(B\left((\Z/2 \ast \Z/2) \times \Z/2\right)) \oplus H^*(B\Z/4) \to H^*(B\Z/2)\] is generated by the linear subspace spanned by $\{(t_1^{2k}, t_2^k) : k \ge 0\}$ together with $K_1 \times K_2$, where $K_1 = (y_1,y_2)$ is the kernel of the ring homomorphism $H^*(B\left((\Z/2 \ast \Z/2) \times \Z/2\right)) \to H^*(B\Z/2)$ and $K_2 = (y_3)$ is the kernel of the ring homomorphism $H^*(B\Z/4) \to H^*(B\Z/2)$. From this one can verify that $H^*(B\Gamma/\mu_2)$ is the subring of $\F_2[y_1, y_2, t_1, y_3, t_2]/(y_1y_2, y_3^2, uv)$ generated by $y_1, y_2, y_3, \beta_1 := y_1 t_1, \beta_2 := y_2 t_1, z:=t_1^2 + t_2$. This subring has the following presentation:
\[H^*(B(\Gamma/\mu_2)) = \F_2[y_1,y_2,y_3,\beta_1,\beta_2,z]/(y_i y_j,y_3^2, y_i \beta_j, \beta_1 \beta_2, \beta_j^2 - y_j^2 z),\]
where $i$ and $j$ range over all indices such that  $i \in \{1,2,3\}$, $j \in \{1,2\}$ and $i \neq j$. Here $y_i$ has degree $1$, and $z$ and the $\beta_j$ have degree $2$.

\smallskip

Now we can go back to the group $\Gamma$. The $E_2$-page of the Lyndon-Hochschild-Serre spectral sequence for the central extension $1 \to \mu_2 \to \Gamma \to \Gamma/\mu_2 \to 1$ is $\F_2[u] \otimes_{\F_2} H^*(B\Gamma/\mu_2)$ and $d_2(u)$ is given by the $k$-invariant of the extension, which we must figure out now. This $k$-invariant must restrict to the correct $k$-invariant for the $Q_8$ summands and for the $\mu_8$ summand. The $k$-invariant for the extension $1 \to \mu_2 \to Q_8 \to \Z/2 \times \Z/2 \to 1$ is given by $x^2 + xy + y^2$ where $x$ and $y$ are the generators of the cohomology of $\Z/2 \times \Z/2$. And the $k$-invariant for the extension $1 \to \mu_8 \to \Z/4$ is $t_2$, again using the presentation $H^*(B\Z/4) = \F_2[y_3, t_2]/(y_3^2)$. To have those restrictions to the extension for the 3 summands, we must have $d_2(u) = z + \beta_1 + y_1^2 + \beta_2 + y_2^2$.

We are in the situation where a page of the spectral sequence is polynomials in one variable, namely $u$, over the base ring $A$ (here, $A:=H^*(B\Gamma/\mu_2)$), in this situation the entire differential is determined by $d_2(u)$ and the next page is easily written in terms of the ideal generated by $d_2(u)$ and its annihilator. Indeed, given an element $a u^q \in E_2^{p,q}$, we have $d_2(au^q) = q a d_2(u) u^{q-1}$, from which we see that (1) the element is in the kernel of $d_2$ if either $q$ is even or $q$ is odd and $a d_2(u) = 0$, and (2) the image of $d_2$ is spanned by monomials $b u^j$ where $b$ is a multiple of $d_2(u)$ and $j$ is even. Therefore, $E_3^{*,*} = A/(d_2(u))[u^2] \oplus u \ann_A(d_2(u))[u^2]$, where the multiplication is given by the canonical $A/(d_2(u))$-module structure on $\ann_A(d_2(u))$.

Now we claim that $d_2(u)$ is not a zero divisor in the ring $A$, so that $\ann_A(d_2(u)) = 0$. This should be plausible because of the ``leading'' term $z$, and we verified this claim using \textsc{Singular} in the appendix. That tells us that the $E_3$-page is also polynomials in $u^2$ over the new base, $A/(d_2(u))$ and to continue we need to compute $d_3(u^2)$. By the Kudo transgression theorem, $d_3(u^2)$ is the image of $\Sq^1(d_2(u))$ from the $E_2$ page. We can compute this Steenrod square by restricting to the $Q_8$ and $\mu_8$ summands. For the $Q_8$'s, using the notation above, $\Sq^1(x^2 + xy +y^2) = x^2y + xy^2$ which is the image of $y_j z + y_j \beta_j$. For the $\mu_8$ we have $\Sq^1(t_2) = 0$. Together these facts tells us $\Sq^1(d_2(u))$ must be $y_1z + y_1\beta_1 + y_2z + y_2\beta_2$. This \emph{is} a zero divisor, even on the $E_2$-page, since it is annihilated by $y_3$. In the appendix we use \textsc{Singular} to verify that $y_3$ generates the entire annihilator in the $E_3$-page (this is even true in the $E_2$-page). Thus, we get that the $E_4$-page is:
\[ E_4^{*,*} = A/(d_2(u), \Sq^1(d_2(u)))[u^4] \oplus u^2 (y_3)[u^4],\]
where the ideal $(y_3)$ is taken in the ring $A/(d_2(u))$.

The $E_5$-page is the same as the $E_4$-page because $d_4$ vanishes for degree reasons on ring generators ($u^4$, $u^2 y_3$) of the above ring. Again using the Kudo transgression theorem, the next possible differential is $d_5(u^4) = \Sq^2 \Sq^1 d_2(u)$. By a similar computation as for $\Sq^1$, we get $d_5(u^4) = y_1 z^2 + y_1^3 \beta_1 + y_2 z^2 + y_2^3 \beta_2$. This, fortunately, is $0$ on the $E_3$-page as we verify in the appendix. Now the spectral sequence collapses: for degree reasons all remaining differentials are zero on the ring generators for the $E_4$-page and thus on their entire pages of definition. As a result, the $E_4^{*,*}$ obtained above gives $H^*(B(3,Q_{16}); \F_2)$ as an $\F_2$-vector space. We are not claiming that the ring structure we found for the $E_4$-page is the correct cohomology ring for $B(3,Q_{16})$ as we were unable to decide the multiplicative extension problems.

\section{Homotopy type of $B(q,SU(2))$}

We finish with a homotopy pushout description of the spaces $B(q,SU(2))$. For $q\geq 2$ consider the map $PU(2)\times (Q_{2^{q+1}})^n\to \Hom(F_n/\Gamma^{q+1}_n,SU(2))$ where $(g,x_1,\ldots,x_n)\mapsto(gx_1g^{-1},\ldots,gx_ng^{-1})$. It is well defined in the quotient \[PU(2)\times_{N_{q+1}}(Q_{2^{q+1}})^n\to \Hom(F_n/\Gamma^{q+1}_n,SU(2))\]
where the normalizer $N_{q+1}:=N_{PU(2)}(D_{2^{q}})$ acts by translation on $PU(2)$ and by conjugation on $(Q_{2^{q+1}})^n$. Consider the subsets $\Epi(F_n/\Gamma^{q}_n,Q_{2^{q+1}}),\Hom(F_n/\Gamma^{q}_n,Q_{2^{q+1}})\subset (Q_{2^{q+1}})^n$ and the restrictions of the above map to the respective subspaces
\[\xymatrix{
\emptyset\ar[r]\ar[d]&PU(2)\times_{N_{q+1}}\Hom(F_n/\Gamma^{q}_n,Q_{2^{q+1}})\ar@{^(->}[d]\ar[r]&\Hom(F_n/\Gamma^{q}_n,SU(2))\ar@{^(->}[d]\\
PU(2)\times_{N_{q+1}}\Epi(F_n/\Gamma^{q}_n,Q_{2^{q+1}})\ar@{^(->}[r]&PU(2)\times_{N_{q+1}}(Q_{2^{q+1}})^n\ar[r]&\Hom(F_n/\Gamma^{q+1}_n,SU(2))
.}\]
We claim that all the above squares are pushouts. From the proof of Theorem \ref{main} one can deduce the homeomorphism
\[PU(2)\times\Epi(F_n/\Gamma^{q}_n,Q_{2^{q+1}})/N_{q+1}\cong \Hom(F_n/\Gamma^{q+1}_n,SU(2))-\Hom(F_n/\Gamma^{q}_n,SU(2)).\]
Since $\Epi(F_n/\Gamma^{q}_n,Q_{2^{q+1}})$ is discrete; $N_{q+1}$ is finite and acts freely, we have that
\[PU(2)\times\Epi(F_n/\Gamma^{q}_n,Q_{2^{q+1}})/N_{q+1}\cong PU(2)\times_{N_{q+1}}\Epi(F_n/\Gamma^{q}_n,Q_{2^{q+1}})\]
which proves the outside square to be a pushout in sets. But, $\Hom(F_n/\Gamma^{q}_n,SU(2))$ is closed and $PU(2)\times_{N_{q+1}}\Epi(F_n/\Gamma^{q}_n,Q_{2^{q+1}})$ is the image of a compact space, so that $\Hom(F_n/\Gamma^{q+1}_n,SU(2))$ has the disjoint union topology. This proves our claim for the outside square, and a similar argument can be used for the square on the left side. Now we look at the right square. The outside and left squares being pushouts imply the right square is also a pushout. The middle arrow is a closed cofibration, and thus the right square is also a homotopy pushout. Moreover, since the maps are either inclusions or given by conjugation, all arrows are simplicial maps. Geometric realization commutes with colimits and homotopy colimits, and hence
\[\xymatrix{
PU(2)\times_{N_{q+1}} B({q},Q_{2^{q+1}})\ar[d]\ar[r]&B({q},SU(2))\ar[d]\\
PU(2)\times_{N_{q+1}}BQ_{2^{q+1}}\ar[r]&B(q+1,SU(2))
}\]
is a pushout of topological spaces and a homotopy pushout.

\begin{theorem}\label{main2}
The inclusions in the filtration
\[\Bcom SU(2)\subset B(3,SU(2))\subset\cdots\subset B(q,SU(2))\subset\cdots\]
are homology isomorphisms with coefficients over a ring $R$ where $2$ is invertible. Moreover, the cohomology ring of each term of the filtration is given by $R[c_2,y_1]/(y_1^2)$, where $\deg(c_2)=\deg(y_1)=4$, and $c_2$ is the pullback of the second Chern class $c_2\in H^4(BSU(2);R)$ under the inclusion $B(q,SU(2))\to BSU(2)$.
\end{theorem}

\begin{proof}
Since we have shown that $B(q, Q_{2^{q+1}})$ is the classifying space of an amalgamated product of finite $2$-groups, its homology with coefficients in any ring in which $2$ is invertible, vanishes, as does that of $BQ_{2^{q+1}}$. This implies that over such a ring the left vertical arrow is a homology isomorphism and thus so is the right vertical arrow. The second part of the statement follows from the integral cohomology ring computations in \cite[Theorem 4.2]{lowdim}
\end{proof}

\begin{remark}
  The filtration $B(2,SU(2)) \subseteq B(3,SU(2)) \subseteq \cdots \subseteq BSU(2)$ is not exhaustive. Indeed, by the previous proposition, the union $\bigcup_{q\ge 2} B(q,SU(2))$ has the rational homology of any of the terms in the union, which differs from the rational homology of $BSU(2)$. This is to be expected, since $SU(2)$ has finitely generated subgroups (even finite subgroups) which are not nilpotent.
\end{remark}

Finally, here are some relations with the classifying spaces $B(q,SO(3))$ and $B(q,U(2))$. Let $q\geq 2$ and consider the map $\pi_n:=\pi_*\colon\Hom(F_n/\Gamma^{q+1}_n,SU(2))\to \Hom(F_n/\Gamma^{q}_n,SO(3))$. In section 3 we said that $\pi_n$ is a $2^n$-fold covering map for every $n\geq0$. Moreover, we have the pullback diagrams
\[\xymatrix{\Hom(F_n/\Gamma^{q+1}_n,SU(2))\ar@{^(->}[r]\ar[d]^{\pi_n}&SU(2)^n\ar[d]^{(\pi)^n}\\
\Hom(F_n/\Gamma^{q}_n,SO(3))\ar@{^(->}[r]&SO(3)^n}\quad\text{and}\quad
\xymatrix{\Hom(\Z^n,SU(2))\ar@{^(->}[r]\ar[d]^{\pi_n}&SU(2)^n\ar[d]^{(\pi)^n}\\
\Hom(\Z^n,SO(3))_{\mathds{1}}\ar@{^(->}[r]&SO(3)^n.}\]

Again, all maps in the squares are simplicial. Since geometric realization of simplicial spaces commutes with taking pullbacks, we obtain pullback squares of topological spaces:
\[\xymatrix{B(q+1,SU(2))\ar[r]\ar[d]&BSU(2)\ar[d]\\
B({q},SO(3))\ar[r]&BSO(3)}\quad\text{and}\quad
\xymatrix{\Bcom SU(2)\ar[r]\ar[d]&BSU(2)\ar[d]\\
  \Bcom SO(3)_{\mathds{1}}\ar[r]&BSO(3).}\]
The right hand side arrow is a $B\Z/2$-bundle, hence the two arrows on the left are also principal $B\Z/2$-bundles. 

\begin{corl}\label{corl3} Let $q\geq 2$. Then the maps induced by the double cover $SU(2)\to SO(3)$ 
\begin{enumerate}
\item  $B(q,SU(2))\to B(q,SO(3))$, and 
\item  $\Bcom SU(2)\to \Bcom SO(3)_{\mathds{1}}$ 
\end{enumerate}
are homology isomorphisms with coefficients over a ring $R$ in which $2$ is invertible. 
\end{corl}
\begin{proof}
Consider the composition $B(q,SU(2))\to B(q+1,SU(2))\to B(q,SO(3))$. By Theorem \ref{main2}, the first arrow is an $H_*(-;R)$ isomorphism, and the above argument shows that  $B(q+1,SU(2))\to B(q,SO(3))$ is an $H_*(-;R)$ isomorphism as well, since $H_*(B\Z/2;R)=H_*(\text{pt};R)$.
\end{proof}

\begin{remark}
The case $q=2$ of Corollary \ref{corl3} was recently proved in \cite[Theorem 4.7]{OkayW}
\end{remark}

Now, let $q\geq 2$ and consider the group homomorphism $\psi\colon S^1\times SU(2)\to U(2)$ given by $(\lambda,x)\mapsto \lambda x$, where $S^1$ represents the scalar matrices in $U(2)$. The kernel of $\psi$ is the group of order $2$ generated by $(-1,-I)$. For each $n$ we have the pullback square 
\[\xymatrix{(S^1)^n\times\Hom(F_n/\Gamma^q_n,SU(2))\ar@{^(->}[r]\ar[d]^{\pi_n}&(S^1\times SU(2))^n\ar[d]^{(\psi)^n}\\
\Hom(F_n/\Gamma^{q}_n,U(2))\ar@{^(->}[r]&U(2)^n}\]
which for the same reasons as before, after taking geometric realizations gives a $B\Z/2$-bundle
\[B\Z/2\to BS^1\times B(q,SU(2))\to B(q,U(2)).\]
Under the same hypotheses as in Corollary \ref{corl3}, we have that $BS^1\times B(q,SU(2))\to B(q,U(2))$ is an $H_*(-;R)$ isomorphism.

\appendix
\section{Computing annihilators in \textsc{Singular}}

We made two claims about annihilators in the calculation of $H^*(B(3,Q_{16}))$: that $\ann_A(k) = 0$ and  $\ann_{A/(k)}(y_1z + y_1\beta_1 + y_2z + y_2\beta_2) = (y_3)$ where $A = H^*(B\Gamma/\mu_2)$ and $k = z + \beta_1 + y_1^2 + \beta_2 +y_1^2$ is the $k$-invariant of the central extension $1 \to \mu_2 \to \Gamma \to \Gamma/\mu_2 \to 1$. We verified these claims with help from the \textsc{Singular} computer algebra system \cite{Singular}. We include the code used for the benefit of potential readers facing similar computations but unfamiliar with what computer algebra systems can offer and how easy to use they can be.
\begin{verbatim}
ring P = 2, (y1,y2,y3,b1,b2,z), dp;
ideal I = y1*y2, y1*y3, y2*y3, y3^2, y2*b1, y3*b1, y1*b2, y3*b2,
          b1*b2, b1^2-y1^2*z, b2^2-y2^2*z;
qring A = std(I);
poly k = z + b1 + y1^2 + b2 + y2^2;
quotient(0, k);
\end{verbatim}

This defines $P$ as the polynomial ring $\F_2[y_1,y_2,y_3,\beta_1, \beta_2,z]$; the $2$ specifies the coefficients are from $\F_2$. Then it defines $A$ as the quotient $P/I$ (when $A$ is defined, $P$ is the ``current ring'' and thus the quotient is implicitly a quotient of $A$), and asks for generators of the ideal $(0:(k))$. \textsc{Singular} responds \verb|_[1]=0|, indicating that $k$ is not a zero divisor. To compute the annihilator of $y_1 z + y_1 \beta_1 + y_2 z + y_2 \beta_2$ in $A/(k)$, we can do:
\begin{verbatim}
qring A_mod_k = std(k);
quotient(0, y1*z + y1*b1 + y2*z + y2*b2);
\end{verbatim}
\textsc{Singular} responds \verb|_[1]=y_3| indicating the annihilator is generated by $y_3$.

We also needed to show that $y_1z^2 + y_1^3b_1 + y_2z^2 + y_2^3b_2$ is zero in $A/(k)$. We can check this by asking for \verb|reduce(y1*z^2 + y1^3*b1 + y2*z^2 + y2^3*b2, std(z^3));| when \verb|A_mod_k| is the current ring.

{\footnotesize {\sc \noindent Omar Antol\'{\i}n Camarena\\
   Instituto de Matem\'aticas, UNAM, Mexico City.}\\
  \emph{E-mail address}:
  \href{mailto:omar@matem.unam.mx}{\texttt{omar@matem.unam.mx}}}
\medskip

{\footnotesize {\sc \noindent Bernardo Villarreal \\
   Instituto de Matem\'aticas, UNAM, Mexico City.}\\
  \emph{E-mail address}:
  \href{mailto:villarrealr@matem.unam.mx}{\texttt{villarreal@matem.unam.mx}}}

\begin{thebibliography}{20}

\small 

\bibitem{AdemCheng} A. Adem and M. Cheng. Representation spaces for central extensions and almost commuting unitary matrices. \emph{J. London Math. Soc.} (2016) 94 (2), 503-524 .

\bibitem{Ad5} A. Adem, F. Cohen and E. Torres-Giese. Commuting elements, simplicial spaces and filtrations of classifying spaces. \emph{Math. Proc. Cambridge Philos. Soc.} \textbf{152} (2012), 91-114. 

\bibitem{AdCoGo} A. Adem, F. Cohen and J. M. G\'{o}mez. Commuting elements in central products of special unitary groups. \emph{Proc. Edinburgh Math. Soc. (2)}, \textbf{56} (2013), no. 1, 1-12. 

\bibitem{Ad4} A. Adem, F. Cohen and J. M. G\'{o}mez. Stable splittings, spaces of representations and almost commuting elements in Lie groups. \emph{Math. Proc. Cambridge Philos. Soc.} \textbf{149} (2010), 455-490. 


\bibitem{AdemGomez} A. Adem and J. M. G\'{o}mez.  On the Structure of Spaces of Commuting Elements in Compact Lie Groups.  \emph{Configuration Spaces: Geometry, Topology and Combinatorics, Publ. Scuola Normale Superiore} {\bf{14}} (CRM Series), 2012 (Birkhauser), 1-26.

\bibitem{Ad3} A. Adem, J. M. G\'{o}mez, J. Lind and U. Tillman. Infinite loop spaces and nilpotent K-theory. \emph{Algebr. Geom. Topol.} \textbf{17} (2017) 869-893.

\bibitem{lowdim} O. Antol\'{\i}n-Camarena, S. Gritschacher, B. Villarreal. Classifying spaces for commutativity of low-dimensional Lie groups. Preprint (2018). \href{https://arxiv.org/abs/1802.03632}{\texttt{{https://arxiv.org/abs/1802.03632}}}.
  
\bibitem{Baird} T. Baird, L. Jeffrey and P. Selick. The space of commuting $n$-tuples in $SU(2)$. \emph{Illinois J. Math.}, \textbf{55}, (2011), no.3, 805-813. 

\bibitem{Bergeron} M. Bergeron. The Topology of Nilpotent Representations in Reductive Groups and their Maximal Compact Subgroups. \emph{Geom. Topol.} \textbf{19-3} (2015), 1383--1407.

\bibitem{Crabb} M. C. Crabb. Spaces of commuting elements in $SU(2)$. \emph{Proc. Edinb. Math. Soc. (2)} \textbf{54} (2011), no. 1, 67-75. 

\bibitem{Singular} W. Decker, G.-M. Greuel, G. Pfister and H. Sch{\"o}nemann. \newblock {\sc Singular} {4-1-0} --- {A} computer algebra system for polynomial computations. \newblock \url{http://www.singular.uni-kl.de} (2016).

\bibitem{Goldman} W. M. Goldman. Topological components of spaces of representations. \emph{Invent. Math.} \textbf{93} (1988), 557-607.

\bibitem{Okay} C. Okay. Spherical posets from commuting elements. \emph{J. Group Theory} {\bf 21} (2018), 593-628.


\bibitem{OkayW} C. Okay and B. Williams. On the mod-$l$ homology of the classifying space for commutativity. Preprint (2018).  \href{https://arxiv.org/abs/1812.00142}{\texttt{{https://arxiv.org/abs/1812.00142}}} 
  
\bibitem{Torres} E. Torres-Giese and D. Sjerve. Fundamental groups of commuting elements in Lie groups.  \emph{Bull. London Math. Soc.} (2008) 40 (1), 65-76.  

\bibitem{Villarreal} B. Villarreal. Cosimplicial groups
  and spaces of homomorphisms. \emph{Algebr. Geom. Topol.} \textbf{17-6} (2017), 3519-3545.

\end{thebibliography}
\end{document}